\pgfplotsset{compat=1.15}
\definecolor{ffqqqq}{rgb}{1,0,0}
\definecolor{qqzzqq}{rgb}{0,0.6,0}
\newcommand{\R}{{\mathbb R}}
\newtheorem{theorem}{Theorem}[section]
\newtheorem{lemma}[theorem]{Lemma}
\newtheorem{corollary}[theorem]{Corollary}
\newtheorem{conj}[theorem]{Conjecture}
\DeclareMathOperator{\arcosh}{\mathrm{arcosh}}
\DeclareMathOperator{\arsinh}{\mathrm{arsinh}}
\DeclareMathOperator{\artanh}{\mathrm{artanh}}
\DeclareMathOperator{\diam}{\mathrm{diam}}
\DeclareMathOperator{\conv}{\mathrm{conv}}
\DeclareMathOperator{\perim}{\mathrm{perim}}
\DeclareFontFamily{U}{tipa}{}
\DeclareFontShape{U}{tipa}{m}{n}{<->tipa10}{}
\newcommand{\arc@char}{{\usefont{U}{tipa}{m}{n}\symbol{62}}}%
\newcommand{\arc}[1]{\mathpalette\arc@arc{#1}}
\newcommand{\arc@arc}[2]{%
  \sbox0{$\m@th#1#2$}%
  \vbox{
    \hbox{\resizebox{\wd0}{\height}{\arc@char}}
    \nointerlineskip
    \box0
  }%
}
\title[Ordinary reduced hyperbolic polygons]{On the perimeter, diameter and circumradius of ordinary hyperbolic reduced polygons}
\author{\'Ad\'am Sagmeister}
\address{Bolyai Institute, University of Szeged, Aradi vértanúk tere 1, H-6720 Szeged,
Hungary}
\email{sagmeister.adam@gmail.com }
\subjclass[2020]{Primary: 51M09, 51M10, 52A55}
\keywords{convex geometry, hyperbolic geometry, minimal width, thickness, reduced bodies, reduced polygons, perimeter, diameter, circumradius}
\begin{document}

\begin{abstract}
A convex body $R$ in the hyperbolic plane is called reduced if any convex body $K\subset R$ has a smaller minimal width than $R$. We answer a few of Lassak's questions about ordinary reduced polygons regarding its perimeter, diameter and circumradius, and we also obtain a hyperbolic extension of a result of Fabi\'nska.
\end{abstract}

\maketitle

\section{Introduction}
\label{sec:intro}

The concept of reducedness was introduced by Heil \cite{H78} in 1978 motivated by some volume minimizing problems. A convex body (i. e. a convex compact set of non-empty interior) $K$ is called reduced if an arbitrary convex body strictly contained in $K$ has smaller minimal width than $K$. P\'al \cite{Pal} proved in 1921 that for fixed minimal width, the regular triangle has minimal area among convex bodies in the Euclidean plane. This result of his is also known as the isominwidth inequality. The same problem in higher dimensions remains unsolved, as there are no reduced simplices in $\R^n$ for $n\geq 3$ (see \cite{MW02,MS04}), therefore there are no really good candidates for the volume minimizing problems -- so far the best one in $\R^3$ is the so-called Heil body, which has a smaller volume than any rotationally symmetric body of the same minimal width. The problem can be naturally generalized to other spaces, a natural approach is to study the problem in spaces of constant curvature. Bezdek and Blekherman \cite{Bez00} proved that, if the minimal width is at most $\frac{\pi}{2}$, the regular triangle minimizes the area in $S^2$. However, for spherical bodies of larger minimal width, the minimizers of the isominwidth problem are polars of Reuleaux triangles. Surprisingly enough, there is no solution of the isominwidth problem in the hyperbolic space for arbitrary dimension (this is part of an ongoing work joint with K. J. B\"or\"oczky and A. Freyer).

A reverse isominwidth problem is about finding the maximal volume if the minimal width is fixed. Naturally, this problem does not have a maximizer in general, but for reduced bodies we can ask for the convex body that maximizes the volume. However, in $\R^3$, the diameter of a reduced body of a given minimal width can be arbitrarily large, and hence the Euclidean problem is only interesting on the plane. It is conjectured, that the unique planar reduced bodies maximizing the area and of minimal width $w>0$ in $\R^2$ are the circular disk of radius $\frac{w}{2}$ and the quarter of the disk of radius $w$. A big step towards the proof of this conjecture was made by Lassak, who proved that among reduced $k$-gons, regular ones maximize the area, and as a consequence, all reduced polygons have smaller area than the circle. Following Lassak's footsteps, the same conclusion was derived in $S^2$ by Liu, Chan and Su \cite{LCS}.
Interestingly enough, the characterization of hyperbolic reduced polygons is still unclear, but clearly it must be different from the Euclidean and spherical ones; there exist reduced rhombi on the hyperbolic plane, while Euclidean and spherical reduced polygons are all odd-gons (see Lassak \cite{Las90,Las15}). However, the so-called ordinary reduced polygons can be examined the same way (these are odd-gons whose vertices have distance equal to the minimal width of the polygon from the opposite sides such that the projection of the vertices to these sides are in the relative interior of the sides). In fact, it was shown by the author \cite{S24} that, among ordinary reduced $n$-gons of a fixed width, regular $n$-gons maximize the area.
This answers one of Lassak's questions posed in \cite{Las24}. One of his other questions was about the extremality of the perimeter, which is addressed in Section \ref{sec:perim} where we give an explicit perimeter formula. However, the extremality of the perimeter remains open, but based on the given formula in Theorem \ref{thm:perimeter:hyp}, we propose a conjecture that is surprising given the Euclidean and spherical analogues.
Lassak also proposed to find the infimum of the circumradii of ordinary reduced polygons of minimal width $w$. In order to obtain the best bound for the circumradius, we need the following result.
\begin{theorem}\label{thm:diameterbound:sagmeister}
If $P\subset H^2$ is an ordinary reduced polygon of minimal width $w$, then its diameter is at most
$$
\diam\left(P\right)\leq 2\arcosh\left(\frac{\cosh w+\sqrt{\cosh^2 w+8}}{4}\right)
$$
with equality if and only if $P$ is the regular triangle.
\end{theorem}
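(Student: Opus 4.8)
The plan is to locate the diameter of $P$ between two vertices, pass to the right triangle formed by a vertex and an endpoint of its opposite side, and then recognise the stated bound as the inequality $\diam(P)\le a$, where $a$ is the side length of the regular triangle $\Delta$ of minimal width $w$. Write $P=v_0v_1\cdots v_{2k}$ with indices read mod $2k+1$, so that the side $s_i$ opposite $v_i$ is $v_{i+k}v_{i+k+1}$, and let $\ell_i$ be the line through $s_i$. For each $i$ the line $\ell_i$ supports $P$ and $\max_{x\in P}d(x,\ell_i)=w=d(v_i,\ell_i)$, so $P$ is contained in the $w$-neighbourhood of $\ell_i$ on the $v_i$-side. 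Since the diameter of a convex body in $H^2$ is attained between extreme points, $\diam(P)=d(v_a,v_b)$ for two vertices; the first step is to show that one may choose the diametral pair of the form $\{v_j,v_{j+k}\}$, i.e.\ a vertex together with an endpoint of its opposite side (the slab containments above are what exclude the other possibilities). Fix such $j$, put $b=v_{j+k}$, $c=v_{j+k+1}$, $D=\diam(P)=d(v_j,b)$, and let $f$ be the foot of the perpendicular from $v_j$ onto $\ell_j$; then $|v_jf|=w$, $f$ lies in the relative interior of $bc$, and the right triangle $v_jfb$ gives
\[
\cosh D=\cosh w\,\cosh|fb|,\qquad \sin\gamma=\frac{\sinh w}{\sinh D},
\]
where $\gamma$ is the angle of the triangle $\conv\{v_j,b,c\}$ at $b$.

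Next I would make the reduction algebraic. Dropping an altitude in $\Delta$ shows $\cosh a=\cosh w\cosh(a/2)$ and $\sin\alpha=\sinh w/\sinh a$, where $\alpha$ is the vertex angle of $\Delta$; hence $c_0:=\cosh(a/2)$ is the positive root of $2c_0^2-c_0\cosh w-1=0$, i.e.\ $c_0=\tfrac14\big(\cosh w+\sqrt{\cosh^2 w+8}\big)$, and the asserted bound $2\arcosh\!\big(\tfrac14(\cosh w+\sqrt{\cosh^2 w+8})\big)$ is exactly $a=2\arcosh c_0$. Since the quadratic $2x^2-x\cosh w-1$ is $\le 0$ for $0\le x\le c_0$ and $>0$ for $x>c_0$, using $\cosh D=\cosh w\cosh|fb|$ one obtains the chain of equivalences
\[
\diam(P)\le a\iff\cosh|fb|\le c_0\iff |fb|\le\tfrac{1}{2} D\iff \gamma\ge\alpha ,
\]
so it suffices to prove $\gamma\ge\alpha$, with equality forcing $P=\Delta$.

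To bound $\gamma$ from below I would use the reducedness conditions at the vertices bounding the diametral chord. The two sides of $P$ at $v_j$ are $v_{j-1}v_j$ and $v_jv_{j+1}$, and these are exactly the sides opposite $b=v_{j+k}$ and $c=v_{j+k+1}$; likewise one edge of $P$ at $b$ lies on the side opposite $v_{j-1}$. Reading off the relevant distances in geodesic polar coordinates centred at $v_j$ and at $b$, together with the slab containments, yields the main ingredients: the ray $v_j\to b$ makes an angle at most half the interior angle of $P$ at $v_j$ with the edge $v_jv_{j+1}$; symmetrically for the ray $v_j\to c$; $\gamma$ is at least half the interior angle of $P$ at $b$; and $|fc|\le|fb|$ (from $|v_jc|\le|v_jb|$, so $f$ lies on the $c$-side of the midpoint of $bc$). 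Assembling these local relations into the sharp inequality $\gamma\ge\alpha$ is the crux, and in the equality case every intermediate inequality becomes an equality, which collapses $\conv\{v_j,b,c\}$ to an equilateral triangle equal to $P$.

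The step I expect to be the real obstacle is exactly this assembly: the local constraints must be combined into the \emph{sharp} inequality $\gamma\ge\alpha$, and not merely into some constant bound. I anticipate this needs either a compactness-plus-variation argument showing that an ordinary reduced polygon of width $w$ of maximal diameter must degenerate to the regular triangle, or a delicate global bookkeeping of the angles around $P$; the identification of the equality case and the exclusion of ``flattened'' limiting polygons belong here as well. The first-step reduction — that a diametral pair can be taken of the form $\{v_j,v_{j+k}\}$ — is a subsidiary point that nevertheless requires its own careful argument.
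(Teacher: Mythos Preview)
Your chain of equivalences
\[
\diam(P)\le a\ \Longleftrightarrow\ \cosh|fb|\le c_0\ \Longleftrightarrow\ |fb|\le\tfrac12 D\ \Longleftrightarrow\ \gamma\ge\alpha
\]
is correct, but it is a closed loop rather than a reduction: each of the four statements is equivalent to $D\le a$, because your angle $\gamma$ at $b$ is determined by $D$ and $w$ alone via $\sin\gamma=\sinh w/\sinh D$. So after the second paragraph you are exactly where you started, and the ``crux'' you identify in the third and fourth paragraphs is the entire theorem. The local observations you list are plausible, but you are right that you do not know how to assemble them, and neither of the strategies you propose (compactness-plus-variation, or global angle bookkeeping) is what actually works.

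The idea you are missing is a \emph{splitting} of the angle $\gamma$ at $b$, coming from the butterfly structure. The segment $[v_j,t_j]$ meets the segment from $b$ to its own opposite foot $t_b$ in a point $p$, and the two triangles $[v_j,p,t_b]$ and $[b,p,t_j]$ are congruent. This forces the angle at $b$ in your right triangle to decompose as $\gamma=\alpha_*+\beta_*$, where $\beta_*$ is also the angle at $v_j$ in the same right triangle and $\alpha_*=\angle(t_j,b,t_b)$. Lassak already proved $\alpha_*\ge\beta_*$. The paper's new input is the pointwise lemma $\beta_*\le\tfrac12\alpha$ (half the regular-triangle angle), proved directly from the second law of cosines for this right triangle, which gives $\cosh w=\cos(\alpha_*+\beta_*)/\sin\beta_*$, together with the strict monotonicity of $x\mapsto(1-2x^2)/x$. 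With both inequalities one gets from the law of sines
\[
\sinh|fb|=\frac{\sinh w\,\sin\beta_*}{\sin(\alpha_*+\beta_*)}\le\frac{\sinh w}{2\cos\beta_*}\le\frac{\sinh w}{2\cos(\alpha/2)},
\]
and then Pythagoras finishes. So the crux is not a variational or global argument but a single sharp angle comparison, and it is the decomposition $\gamma=\alpha_*+\beta_*$ that makes that comparison accessible. Finally, your ``subsidiary'' point that the diametral pair has the form $\{v_j,v_{j+k}\}$ is already a theorem of Lassak and can simply be quoted.
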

With this diameter bound, we obtain the following bound for the circumradius.
\begin{theorem}\label{thm:circumradiusbound}
Let $P$ be an ordinary reduced polygon in $H^2$ of minimal width $w$. Then its circumradius is at most
$$
\arsinh\left(\frac{2}{\sqrt{3}}\sqrt{\left(\frac{\cosh w+\sqrt{\cosh^2 w+8}}{4}\right)^2-1}\right)
$$
with equality if and only if $P$ is a regular triangle.
\end{theorem}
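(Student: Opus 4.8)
The plan is to derive the circumradius estimate from the diameter estimate of Theorem~\ref{thm:diameterbound:sagmeister} by feeding it into the hyperbolic analogue of Jung's theorem. Throughout, write $R(P)$ for the circumradius of $P$ (i.e.\ the radius of its smallest enclosing disk) and set $c:=\frac{\cosh w+\sqrt{\cosh^2 w+8}}{4}\ (\ge 1)$, so that Theorem~\ref{thm:diameterbound:sagmeister} reads $\diam(P)\le 2\arcosh c$; equivalently $\cosh\!\big(\tfrac12\diam(P)\big)\le c$, and hence $\sinh\!\big(\tfrac12\diam(P)\big)\le\sqrt{c^2-1}$.

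Next I would invoke the hyperbolic Jung theorem in the plane (due to Dekster): every bounded set $S\subset H^2$ of diameter $D$ is contained in a disk whose radius $\rho$ satisfies $\sinh\rho\le\frac{2}{\sqrt3}\sinh(D/2)$, with equality precisely when $S$ lies in the circumscribed disk of a regular triangle of side $D$. Applying this to $S=P$, and using that $t\mapsto\sinh(t/2)$ is increasing together with the displayed consequence of Theorem~\ref{thm:diameterbound:sagmeister}, gives
\[
\sinh\big(R(P)\big)\ \le\ \frac{2}{\sqrt3}\,\sinh\!\Big(\tfrac12\diam(P)\Big)\ \le\ \frac{2}{\sqrt3}\sqrt{c^2-1}.
\]
Since $\arsinh$ is increasing, this is exactly the asserted bound $R(P)\le\arsinh\!\big(\tfrac{2}{\sqrt3}\sqrt{c^2-1}\big)$.

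For the equality case, suppose $R(P)=\arsinh\!\big(\tfrac{2}{\sqrt3}\sqrt{c^2-1}\big)$. Then both inequalities in the display are equalities, so in particular $\sinh\!\big(\tfrac12\diam(P)\big)=\sqrt{c^2-1}$, i.e.\ $\diam(P)=2\arcosh c$; by the equality case of Theorem~\ref{thm:diameterbound:sagmeister} the polygon $P$ must be a regular triangle. Conversely, let $T$ be the regular triangle of minimal width $w$, which is an ordinary reduced polygon. By Theorem~\ref{thm:diameterbound:sagmeister} its diameter is $2\arcosh c$, and since $T$ is acute its smallest enclosing disk is its circumscribed disk. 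Applying the hyperbolic law of cosines to the isosceles triangle spanned by the circumcenter and two vertices (angle $2\pi/3$ at the circumcenter) yields $\cosh\big(\diam(T)\big)=\cosh^2 R(T)+\tfrac12\sinh^2 R(T)=1+\tfrac32\sinh^2 R(T)$, hence $\sinh R(T)=\tfrac{2}{\sqrt3}\sinh\!\big(\tfrac12\diam(T)\big)=\tfrac{2}{\sqrt3}\sqrt{c^2-1}$. Thus equality holds if and only if $P$ is a regular triangle.

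Since Theorem~\ref{thm:diameterbound:sagmeister} already carries the analytic weight, the only genuinely delicate point is bookkeeping around the two equality cases: one must quote the hyperbolic Jung theorem with the correct constant $\tfrac{2}{\sqrt3}$ and its $n=2$ equality condition, and record the standard facts that the regular triangle of width $w$ is ordinary reduced and that for it the minimal enclosing disk coincides with the circumscribed disk. Once these are in place, the two monotone chains above yield both the inequality and the characterization with no further computation.
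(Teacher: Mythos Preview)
Your proof is correct and follows essentially the same approach as the paper: apply Dekster's hyperbolic Jung inequality $R(P)\le\arsinh\!\big(\tfrac{2}{\sqrt3}\sinh(\diam(P)/2)\big)$, then plug in the diameter bound of Theorem~\ref{thm:diameterbound:sagmeister} together with $\sinh(\arcosh x)=\sqrt{x^2-1}$. Your treatment of the equality case is in fact more careful than the paper's, which simply asserts that equality is ``clear'' from Theorem~\ref{thm:diameterbound:sagmeister}; your explicit verification via the law of cosines that the regular triangle attains the bound is a welcome addition.
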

Finally, we also have the following result which is analogous to the results of Fabi\'nska \cite{Fab} and Musielak \cite{Mus}.
\begin{theorem}\label{thm:smallcircleattheboundary}
Every ordinary reduced hyperbolic polygon of minimal width $w$ is contained in a circular disk of radius $w$ centered at one of its boundary points.
\end{theorem}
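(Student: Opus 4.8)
The plan is to exhibit an explicit boundary point that works, namely the foot of a thickness chord. Write $P$ as an ordinary reduced $(2k+1)$-gon with vertices $v_0,\dots,v_{2k}$ in cyclic order, let $S=[v_k,v_{k+1}]$ be the side opposite $v_0$, let $\ell$ be the geodesic carrying $S$, and let $p$ be the orthogonal projection of $v_0$ onto $\ell$; by hypothesis $p\in\relint S$ and $d(v_0,p)=w$. I claim $P\subseteq\overline{B}(p,w)$, which suffices since $p\in\partial P$. As geodesic balls in $H^2$ are convex and $P$ is the convex hull of its vertices, it is enough to prove $d(p,v_i)\le w$ for every $i$. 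I will use Fermi coordinates along $\ell$: for $x\in H^2$ put $h(x)=\distance(x,\ell)$ and let $t(x)$ be the signed arclength along $\ell$ from $p$ to the orthogonal projection of $x$ onto $\ell$. Dropping the perpendicular from $v_i$ to $\ell$ and applying the hyperbolic Pythagorean theorem to the right triangle it forms with $p$ gives $\cosh d(p,v_i)=\cosh h(v_i)\cosh t(v_i)$, so the goal becomes
\begin{equation*}
\cosh h(v_i)\,\cosh t(v_i)\ \le\ \cosh w\qquad(0\le i\le 2k).
\end{equation*}
The height is easy to control: since the chord $[v_0,p]$ realises the minimal width of $P$, the polygon lies in the strip of width $w$ bounded by $\ell$ and by the perpendicular to $[v_0,p]$ through $v_0$, so $h(v_i)\le w$ for every $i$, with equality only for $i=0$, where also $t(v_0)=0$.

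The substance of the argument is the horizontal estimate. The endpoints $v_k,v_{k+1}$ of $S$ lie on $\ell$, so there $h=0$ and the claim reduces to $d(p,v_k)\le w$ and $d(p,v_{k+1})\le w$. For $v_k$ the right triangle with vertices $v_0,p,v_k$ (right angle at $p$) gives $\cosh d(v_0,v_k)=\cosh w\cdot\cosh d(p,v_k)$; since also $d(v_0,v_k)\le\diam(P)$ and, by Theorem~\ref{thm:diameterbound:sagmeister}, $\diam(P)\le 2\arcosh\!\big(\tfrac{\cosh w+\sqrt{\cosh^2 w+8}}{4}\big)$, it remains to observe the elementary inequality $2\arcosh\!\big(\tfrac{\cosh w+\sqrt{\cosh^2 w+8}}{4}\big)\le\arcosh(\cosh^2 w)$ (which reduces to $\cosh^2 w\ge 1$). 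This yields $\cosh d(p,v_k)\le\cosh w$, hence $d(p,v_k)\le w$, and symmetrically for $v_{k+1}$. For a triangle these are all the vertices, so Theorem~\ref{thm:smallcircleattheboundary} is already proved there, the bounding disk touching $P$ only at $v_0$. For $k\ge2$ one still has to bound $\cosh h(v_i)\cosh t(v_i)$ at the remaining vertices; the critical ones are those realising the width of $P$ in the direction of $\ell$, for which $|t(v_i)|$ is largest while $h(v_i)$ need not be small. Here I intend to combine three ingredients: convexity of $P$; the bound $|t(v_i)|\le\diam(P)$ coming from the fact that the width of $P$ in the direction of $\ell$ is at most its diameter; and, decisively, the width-$w$ strips perpendicular to the sides $S_i$ opposite these vertices together with the reducedness requirement that the feet of the corresponding thickness chords lie in $\relint{S_i}$ --- this is what prevents a vertex from lying both far above $\ell$ and far from $p$ along $\ell$.

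I expect this last point to be the main obstacle. The one-variable inequality $\cosh h\cosh t\le\cosh w$ cannot follow from the diameter bound alone: writing $v_0=(0,w)$ and $v_i=(t(v_i),h(v_i))$ in Fermi coordinates one has $\cosh d(v_0,v_i)=\cosh w\cosh h(v_i)\cosh t(v_i)-\sinh w\sinh h(v_i)$, and when $h(v_i)$ is close to $w$ the bound $d(v_0,v_i)\le\diam(P)$ controls $\cosh h(v_i)\cosh t(v_i)$ only very weakly. So reducedness must be used in an essential way to exclude such a vertex, and the technical heart of the proof will be making this quantitative and propagating the bound $\cosh h(v_i)\cosh t(v_i)\le\cosh w$ vertex by vertex along the two boundary arcs from $v_0$ to $v_k$ and from $v_0$ to $v_{k+1}$, keeping the several thickness-chord strips and the convexity of $P$ consistent with one another.
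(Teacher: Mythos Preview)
Your proposal is not a proof but a programme: you set up Fermi coordinates, dispose of $v_0$, $v_k$, $v_{k+1}$ (the computation there is correct), and then explicitly concede that the remaining vertices constitute ``the main obstacle'' which you have not resolved. For $k\ge 2$ the inequality $\cosh h(v_i)\cosh t(v_i)\le\cosh w$ at the intermediate vertices \emph{is} the theorem, and you offer only a list of ingredients (convexity, strip bounds, reducedness) with no argument that combines them. You yourself observe that the diameter bound alone is far too weak when $h(v_i)$ is close to $w$; nothing in the proposal fills that gap.

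There is a second, structural problem. You are trying to prove something strictly stronger than the statement: that the foot of \emph{every} thickness chord is an admissible centre. The theorem only asserts the existence of \emph{some} boundary point, and it is not at all clear that your stronger claim is true --- neither the Euclidean nor the spherical precedents (Fabi\'nska, Musielak) prove anything of the sort. The paper accordingly does not attempt to name a centre. It argues by contradiction: the set $C(P,w)=\{x:P\subseteq B(x,w)\}$ is nonempty (because $R(P)<w$ by Corollary~\ref{corollary:Rislessthanw}) and convex; if it avoided $\partial P$ it would lie entirely in the interior of $P$. One then analyses $C(P,w)$ as an intersection of $w$-disks centred at the vertices of $P$ (Lemma~\ref{lemma:characterizationofcenterset}, Corollary~\ref{corollary:Estarcharacterization}), and uses a diametral chord together with the thickness chords to manufacture two points of $P$ at distance exceeding $\diam P$ --- a contradiction. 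This indirect route bypasses precisely the vertex-by-vertex estimate on which your approach founders, at the cost of not telling you which boundary point actually works.
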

The paper is structured as follows. In Section \ref{sec:preliminaries} we describe the fundamental concepts and notations about hyperbolic width and reducedness. In Section \ref{sec:ordinary}, we introduce  ordinary reduced polygons and explain some of their basic properties, including a few of the key ideas that will be used to obtain some of the main results. In Section \ref{sec:perim} we give a perimeter formula for ordinary reduced $n$-gons. In Section \ref{sec:diameter} we prove Theorem \ref{thm:diameterbound:sagmeister}, while in Section \ref{sec:circumradius} we prove Theorem \ref{thm:circumradiusbound} and Theorem \ref{thm:smallcircleattheboundary}.

\section{Preliminaries}\label{sec:preliminaries}

We use the notation $H^2$ for the hyperbolic plane, which is equipped with the geodesic metric. The geodesic distance of two points $x,y\in H^2$ will be denoted as $d\left(x,y\right)$. In this section, we introduce hyperbolic convexity. Many of the concepts are identical with their Euclidean analogues, but as we will soon see, there are exceptions.

For a subset $X$ of the hyperbolic plane $H^2$, we say that $X$ is \emph{convex}, if for any pair of points $x$ and $y$, the unique geodesic segment $\left[x,y\right]$ connecting $x$ and $y$ is a subset of $X$ (where $\left[x,x\right]=\left\{x\right\}$). A \emph{convex body} is a convex compact set of non-empty interior. It is clear, that similarly to Euclidean convexity, the intersection of an arbitrary family of convex sets in the hyperbolic plane is also convex, so we define the \emph{convex hull} of a set $X\subseteq H^2$ as the intersection of all convex sets in $H^2$ containing $X$ as a subset, and we will use the notation $\conv\left(X\right)$ for the convex hull of $X$. The convex body obtained as the convex hull of the finite set $X=\left\{x_1,\ldots,x_n\right\}$ is called a \emph{polygon}, and we use the notation $\left[x_1,\ldots,x_n\right]$ for $\conv\left(X\right)$. A point $x_j\in X$ is a \emph{vertex} of the polygon $X$ if $x_j\not\in\conv\left(X\setminus\left\{x_j\right\}\right)$; a \emph{$k$-gon} in the hyperbolic plane is a polygon of $k$ vertices.

For convex bodies, width is an important concept. On the hyperbolic plane there are many different notions of width (see Santal\'o \cite{S45}, Fillmore \cite{Fil70}, Leichtweiss \cite{Lei05}, Jer\'onimo-Castro--Jimenez-Lopez \cite{JCJL17}, G. Horv\'ath \cite{Hor21}, B\"or\"oczky--Cs\'epai--Sagmeister \cite{BoCsS}, Lassak \cite{Las24}). We will use the width function introduced by Lassak, but we note that it is identical with the extended Leichtweiss width defined by B\"or\"oczky, Cs\'epai and Sagmeister \cite{BoCsS} on supporting lines. A hyperbolic line $\ell$ is called a \emph{supporting line} of the convex body $K$ if $K\cap\ell\neq\emptyset$, and $K$ is contained in one of the closed half-spaces bounded by $\ell$. The \emph{width} of the convex body $K$ with respect to the supporting line $\ell$ is the distance of $\ell$ and $\ell'$, where $\ell'$ is a (not necessarily unique) most distant supporting line from $\ell$. It is known that this width function is continuous, and its maximal value coincides with the diameter of the convex body, which will be denoted as $\diam\left(K\right)$. The \emph{minimal width} (i.e. the minimal value of the width function on the set of all supporting lines, also known as the \emph{thickness}) of $K$ is denoted by $w\left(K\right)$. This notion of minimal width is a monotonic function, that is for arbitrary convex bodies $K,L$ such that $K\subseteq L$, we have $w\left(K\right)\leq w\left(L\right)$.  Hence, the concept of hyperbolic reducedness makes perfect sense. A convex body $K$ is called \emph{reduced}, if for any convex body $K'\subsetneq K$, $w\left(K'\right)<w\left(K\right)$. Reduced bodies are well-studied (see Heil \cite{H78}, Lassak--Martini \cite{LaM05,LaM11,LaM14}, Lassak--Musielak \cite{LaM18r}), as they are often extremizers of volume minimizing problems, and bodies of constant width are also reduced.

If we consider the Poincar\'e disk model of the hyperbolic plane $H^2$, hyperbolic lines are either diameters of the unit disk, or circular arcs intersecting the unit circle orthogonally. The boundary points of the unit disk are the \emph{ideal points} of the hyperbolic plane, and hence there is a natural bijection between hyperbolic lines and pairs of ideal points. Besides the identity, there are three types of orientation preserving isometries of the hyperbolic plane depending the number of fixed points. If there is one fixed point, the isometry is called an \emph{elliptic isometry} (or \emph{rotation}). We call an isometry with exactly one fixed ideal point, it is called a \emph{parabolic isometry}. Finally, isometries with exactly two fixed ideal points are called \emph{hyperbolic isometries}, which maps the line corresponding to the two fixed ideal points to itself.

\section{Ordinary reduced polygons}\label{sec:ordinary}

Lassak proved that hyperpolic convex odd-gons of thickness $w$ are reduced if all vertices are of distance $w$ from the opposite sides, and the orthogonal projections of these vertices onto the opposite sides are in the relative interior of these sides (see \cite{Las24}). Such polygons are called \emph{ordinary reduced polygons}, since this property characterizes reducedness both in $\R^2$ and in $S^2$ (see Lassak \cite{Las90,Las15}), but not in the hyperbolic plane. In an ongoing work with Ansgar Freyer and K\'aroly Jr. B\"or\"oczky we show that, for each $w>0$ there are reduced rhombi, whose diameters are unbounded. The characterization of hyperbolic reduced polygons is therefore unclear, so we focus on ordinary reduced polygons in this paper. For the diameter of an ordinary reduced polygon of thickness $w$, we have the following by Lassak \cite{Las24}.

\begin{theorem}\label{thm:diameterbound:lassak}
Let $P\subset H^2$ be an ordinary reduced polygon of thickness $w$ and diameter $d$. Then,
$$
w<d<\arcosh\left(\cosh w\sqrt{1+\frac{\sqrt{2}}{2}\sinh w}\right).
$$
\end{theorem}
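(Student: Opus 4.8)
The plan is to treat the two inequalities separately. The left inequality is elementary and follows directly from the defining incidence between a vertex and its opposite side; the right inequality is the substantial one and rests on a reducedness-driven estimate controlling how far a vertex can lie from the endpoints of the opposite side.

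\emph{The lower bound.} Fix any vertex $v$ of $P$ and let $[a,b]$ be the side opposite to $v$. By the definition of an ordinary reduced polygon the foot $f$ of the perpendicular from $v$ onto the line carrying $[a,b]$ satisfies $d(v,f)=w$, and $f$ lies in the relative interior of $[a,b]$, so $f\neq a$ and $d(f,a)>0$. The triangle with vertices $v,f,a$ has a right angle at $f$, whence the hyperbolic theorem of Pythagoras gives
$$\cosh d(v,a)=\cosh w\,\cosh d(f,a)>\cosh w.$$
Thus $d(v,a)>w$, and since the diameter is the supremum of the distances between points of $P$, we obtain $d\geq d(v,a)>w$.

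\emph{The upper bound.} Because the diameter of a polygon is attained between two of its vertices, it suffices to bound $d(p,q)$ for every pair of vertices $p,q$. I would fix such a pair and exploit the thickness chord issued from $p$: it meets the side opposite to $p$ orthogonally at a foot $f_p$ with $d(p,f_p)=w$, and $P$ lies in the strip of width $w$ bounded by the line carrying that opposite side and the supporting line of $P$ at $p$ perpendicular to the chord. Dropping the perpendicular from $q$ onto the line of the opposite side, with foot $q'$ and height $d(q,q')\leq w$, one can express $d(p,q)$ through the resulting Lambert quadrilateral $p\,f_p\,q'\,q$ as an explicit function of $w$, of the height $d(q,q')$, and of the along-side displacement $d(f_p,q')$. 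Monotonicity of that function in its arguments then reduces the whole problem to a single quantity: an upper bound for how far, along a side, a vertex of $P$ can be displaced from the foot of an opposite thickness chord.

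\emph{The crux.} The heart of the argument is this displacement bound, and here reducedness must be used essentially: $w$ is the \emph{minimal} width, so no proper convex subbody retains width $w$, which forces each vertex to sit close to its opposite side and caps the along-side spread. I expect the extremal configuration to be the one in which the pertinent support directions at the displaced vertex enclose an angle of $45^\circ$; this is the mechanism I anticipate produces the constant $\frac{\sqrt2}{2}$ in the statement, with $\cos 45^\circ=\sin 45^\circ=\frac{\sqrt2}{2}$ entering the corresponding hyperbolic right-triangle identity. The main obstacle is to convert reducedness, together with the odd-gon incidence pattern linking vertices to opposite sides, into a rigorous bound on the along-side displacement that is sharp enough to yield exactly the factor $\frac{\sqrt2}{2}$, and then to verify that assembling this bound with the Lambert-quadrilateral expression gives the claimed strict inequality $d<\arcosh\!\left(\cosh w\sqrt{1+\frac{\sqrt2}{2}\sinh w}\right)$.
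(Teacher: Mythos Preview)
The paper does not prove this statement; it is quoted from Lassak \cite{Las24} and used only as background (and is later superseded by the sharp Theorem~\ref{thm:diameterbound:sagmeister}). So there is no proof in the paper to compare against, and your proposal has to stand on its own.

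Your lower-bound argument is complete and correct. Your upper bound, by contrast, is not a proof but a programme whose decisive step is missing---and you acknowledge this yourself (``The main obstacle is to convert reducedness\ldots''). The Lambert-quadrilateral reduction is reasonable in spirit, but the along-side displacement bound that is supposed to produce the constant $\tfrac{\sqrt2}{2}$ is never established; the suggestion that a $45^\circ$ extremal configuration is responsible is a guess, not an argument. There is also a structural issue: you quantify over \emph{all} vertex pairs $(p,q)$ and drop $q$ onto the side opposite $p$, but for a generic pair $q$ need not lie in the strip of width $w$ determined by that side, so the figure $p\,f_p\,q'\,q$ need not be a Lambert quadrilateral with the claimed height bound $d(q,q')\le w$. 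What makes the problem tractable is Lassak's observation (recorded here as Lemma~\ref{lemma:diametralchord}) that a diametral chord joins $v_i$ to $v_{i+(n\pm1)/2}$, so one can work directly in the single right triangle $\left[v_i,t_i,v_{i+(n+1)/2}\right]$; this is exactly how the paper obtains its sharper bound in Section~\ref{sec:diameter}, via angle estimates rather than a displacement bound along the side.
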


As a consequence, for each $n$ we can expect an $n$-gon of extremal area among ordinary reduced $n$-gons of thickness $w$ by Blaschke's Selection Theorem. In the remainder of the section we will discuss the area of hyperbolic reduced $n$-gons based on the arguments of Lassak \cite{Las05} and Liu--Chang--Su \cite{LCS}.

From now on, $P$ denotes an ordinary reduced $n$-gon in $H^2$ whose vertices are $v_1,\ldots,v_n$ in cyclic order with respect to the positive orientation. For each $i$, let $t_i$ be the orthogonal projection of $v_i$ on the line through $v_{i+\frac{n-1}{2}}$ and $v_{i+\frac{n+1}{2}}$, where the indices are taken mod $n$. By definition, $t_i$ is in the relative interior of $\left[v_{i+\frac{n-1}{2}},v_{i+\frac{n+1}{2}}\right]$, and hence the geodesic segments $\left[v_i,t_i\right]$ and $\left[v_{i+\frac{n+1}{2}},t_{i+\frac{n+1}{2}}\right]$ intersect in a point $p_i$. Let $B_i$ be the union of the two triangles:
$$
B_i=\left[v_i,p_i,t_{i+\frac{n+1}{2}}\right]\cup\left[v_{i+\frac{n+1}{2}},p_i,t_i\right];
$$
we will call $B_i$ a \emph{butterfly}. We observe that these butterflies cover the polygon (see Sagmeister \cite{S24}).

\begin{center}
\includegraphics[scale=0.7]{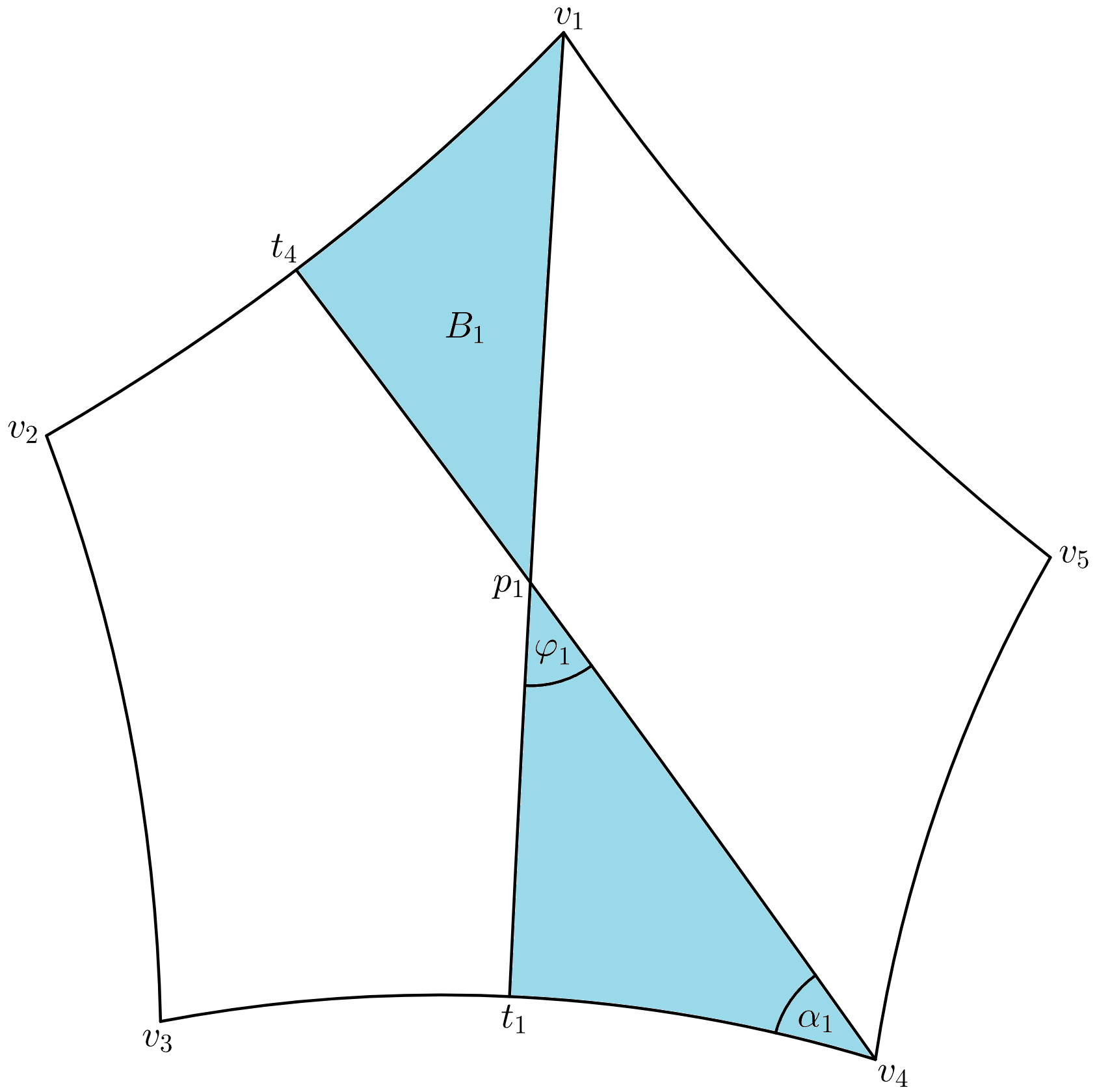}
\end{center}

\begin{lemma}\label{lemma:butterflies}
Let $P\subset H^2$ be an ordinary reduced $n$-gon, and $B_i$ be its $i^{\mathrm{th}}$ butterfly. Then,
$$
P=\bigcup_{i=1}^n B_i.
$$
\end{lemma}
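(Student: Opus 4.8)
The plan is to prove the inclusions $\bigcup_{i=1}^nB_i\subseteq P$ and $P\subseteq\bigcup_{i=1}^nB_i$ separately. The first is immediate: each butterfly $B_i$ is a union of two geodesic triangles whose vertices (namely $v_i,p_i,t_{i+\frac{n+1}{2}}$, respectively $v_{i+\frac{n+1}{2}},p_i,t_i$) all lie in $P$, so convexity of $P$ gives $B_i\subseteq P$.

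For the reverse inclusion, write $m=\frac{n+1}{2}$ and let $D_j=[v_j,t_j]$ be the $j$-th spoke, so that $n$ is odd, $t_j$ lies in the relative interior of the side $[v_{j+m-1},v_{j+m}]$, and $p_i=D_i\cap D_{i+m}$. For each $j$ let $H_j^+$ be the closed half-plane bounded by the line $\overline{D_j}$ that contains the vertex $v_{j+1}$ (note $v_{j+1}\notin\overline{D_j}$, because $\overline{D_j}$ meets $\partial P$ exactly in $\{v_j,t_j\}$), and let $H_j^-$ be the complementary closed half-plane. The key structural claim I would establish is
$$
\left[v_i,p_i,t_{i+m}\right]=H_i^+\cap H_{i+m}^+\cap P,\qquad
\left[v_{i+m},p_i,t_i\right]=H_i^-\cap H_{i+m}^-\cap P.
$$
To prove it, I would first record the cyclic order of the $2n$ points $v_1,\dots,v_n,t_1,\dots,t_n$ on $\partial P$: since $t_j$ sits on $[v_{j+m-1},v_{j+m}]$ and $j\mapsto j+m-1$ is a bijection mod $n$, each side $[v_k,v_{k+1}]$ contains exactly the foot point $t_{k+m}$, so the $v$'s and $t$'s alternate around $\partial P$, with $t_{k+m}$ lying between $v_k$ and $v_{k+1}$. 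With this order at hand, $H_i^+\cap H_{i+m}^+\cap P$ is convex, and tracing its boundary shows that it is bounded precisely by the subsegment $[v_i,p_i]$ of $D_i$, the subsegment $[p_i,t_{i+m}]$ of $D_{i+m}$, and the sub-edge $[v_i,t_{i+m}]$ of $\partial P$ --- hence it equals the triangle $[v_i,p_i,t_{i+m}]$; the second identity follows by the symmetric computation. Consequently, if $x\in\inte P$ lies on none of the lines $\overline{D_j}$ and we set $s_j(x)=+1$ for $x\in H_j^+$ and $s_j(x)=-1$ otherwise, then $x\in B_i$ if and only if $s_i(x)=s_{i+m}(x)$.

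Now the covering follows from a short parity argument. Suppose some $x\in\inte P$ that avoids all the lines $\overline{D_j}$ lies in no $B_i$; then $s_{i+m}(x)=-s_i(x)$ for every index $i$. Applying this relation $n$ times in succession gives $s_{i+nm}(x)=(-1)^ns_i(x)$; but $i+nm\equiv i\pmod n$, so $s_{i+nm}(x)=s_i(x)$, whence $s_i(x)=-s_i(x)$ because $n$ is odd --- a contradiction. Hence every such $x$ lies in some $B_i$. Since $\bigcup_{i=1}^nB_i$ is closed and the points of $\inte P$ off the finitely many lines $\overline{D_j}$ are dense in $P$, we conclude $P\subseteq\bigcup_{i=1}^nB_i$.

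I expect the only genuine work to be the pair of half-plane identities above: everything there reduces to the combinatorics of how the spokes meet $\partial P$, and once the alternating cyclic order of the vertices and foot points is pinned down, identifying each wing of $B_i$ with an intersection $H_i^{\pm}\cap H_{i+m}^{\pm}\cap P$ is routine boundary bookkeeping; the covering statement itself is then the parity computation. (Alternatively, one may simply invoke the corresponding fact established in \cite{S24}.)
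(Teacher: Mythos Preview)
The paper does not actually prove this lemma: it merely states the result and refers to \cite{S24} for a proof. Your final parenthetical remark therefore already matches the paper's treatment exactly.

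Your self-contained argument goes beyond what the paper offers, and it is correct. The half-plane identities
\[
[v_i,p_i,t_{i+m}]=H_i^+\cap H_{i+m}^+\cap P,\qquad [v_{i+m},p_i,t_i]=H_i^-\cap H_{i+m}^-\cap P
\]
hold for the reasons you indicate (the alternating cyclic order of the $v_k$ and $t_{k+m}$ along $\partial P$ pins down which boundary arcs lie in which half-plane), and the parity step is sound: since $n$ is odd and $nm\equiv 0\pmod n$, the assumption $s_{i+m}(x)=-s_i(x)$ for all $i$ forces $s_i(x)=(-1)^n s_i(x)=-s_i(x)$. The density/closure passage to all of $P$ is also fine. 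So you have supplied a genuine proof where the paper only gives a citation; the one place that would deserve a couple more lines in a polished write-up is the verification that $H_i^+\cap H_{i+m}^+\cap P$ has exactly the three boundary pieces $[v_i,p_i]$, $[p_i,t_{i+m}]$, $[t_{i+m},v_i]$, but you have correctly identified this as routine once the cyclic order is known.
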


We introduce a few additional notations for some angles of the butterflies. Let
$$
\varphi_i=\angle\left(v_i,p_i,t_{i+\frac{n+1}{2}}\right)=\angle\left(t_i,p_i,v_{i+\frac{n+1}{2}}\right)
$$
and
$$
\alpha_i=\angle\left(t_i,v_{i+\frac{n+1}{2}},p_i\right).
$$
The following lemma from Sagmeister \cite{S24} shows that the two triangles involved in the butterfly $B_i$ are congruent.

\begin{lemma}\label{lemma:congruence}
The two triangles $\left[v_i,p_i,t_{i+\frac{n+1}{2}}\right]$ and $\left[v_{i+\frac{n+1}{2}},p_i,t_i\right]$ defining $B_i$ are congruent.
\end{lemma}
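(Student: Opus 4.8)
The plan is to deduce the congruence of the two butterfly triangles from a single larger congruence, namely that of the two right triangles cut off by the perpendiculars $[v_i,t_i]$ and $[v_{i+\frac{n+1}{2}},t_{i+\frac{n+1}{2}}]$ on the common chord $[v_i,v_{i+\frac{n+1}{2}}]$. Write $j=i+\frac{n+1}{2}$ (indices mod $n$), so that $t_i$ is the foot of the perpendicular dropped from $v_i$ onto the line $L$ carrying the opposite side $[v_{j-1},v_j]$, while $t_j$ is the foot of the perpendicular from $v_j$ onto the line $L'$ carrying $[v_i,v_{i+1}]$. The point I would exploit is that $v_j\in L$ and $v_i\in L'$, so that both feet see the chord $[v_i,v_j]$: the triangles $A=[v_i,t_i,v_j]$ and $B=[v_j,t_j,v_i]$ are right-angled at $t_i$ and at $t_j$ respectively, they share the hypotenuse $[v_i,v_j]$, and each has a leg of length $w$, since by the defining property of ordinary reduced polygons $d(v_i,t_i)=d(v_j,t_j)=w$. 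Both triangles are nondegenerate because $d(v_i,L)=d(v_j,L')=w>0$.

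First I would invoke the hyperbolic hypotenuse-leg congruence theorem to conclude $A\cong B$ under the correspondence $v_i\leftrightarrow v_j$, $t_i\leftrightarrow t_j$. This yields two facts: the base angles on the shared hypotenuse agree, $\angle(v_j,v_i,t_i)=\angle(v_i,v_j,t_j)=:\gamma$, and the remaining legs agree, $d(v_i,t_j)=d(v_j,t_i)=:g$. Now the intersection point $p_i$ lies in the relative interior of both $[v_i,t_i]$ and $[v_j,t_j]$, so the ray from $v_i$ through $p_i$ is the ray $v_i\to t_i$ and the ray from $v_j$ through $p_i$ is the ray $v_j\to t_j$. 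Consequently, in the triangle $[v_i,v_j,p_i]$ the angles at $v_i$ and at $v_j$ are both equal to $\gamma$, so by the isosceles triangle theorem $d(v_i,p_i)=d(v_j,p_i)$; subtracting these equal lengths from $d(v_i,t_i)=d(v_j,t_j)=w$, using that $p_i$ lies between the endpoints, gives $d(p_i,t_i)=d(p_i,t_j)$ as well.

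It then remains only to compare the butterfly triangles $[v_i,p_i,t_j]$ and $[v_j,p_i,t_i]$. Each is right-angled, at $t_j$ and at $t_i$ respectively, because $[v_j,t_j]\perp L'$ and $[v_i,t_i]\perp L$; the two legs meeting at the right angle of the first are $d(t_j,v_i)=g$ and $d(t_j,p_i)$, and those of the second are $d(t_i,v_j)=g$ and $d(t_i,p_i)$. Since $g=g$ and $d(t_j,p_i)=d(t_i,p_i)$ by the previous paragraph, the two triangles agree in two legs and the included right angle, so SAS gives the asserted congruence. I expect the only real subtlety to be the opening move: spotting that the two perpendiculars, together with the chord $[v_i,v_j]$, form two right triangles on a common hypotenuse, so that hypotenuse-leg congruence applies. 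Once this is seen, the equality $d(v_i,p_i)=d(v_j,p_i)$, which is precisely what is needed and which (as one checks) does not follow from the two butterfly right triangles considered in isolation, drops out of the isosceles triangle theorem.
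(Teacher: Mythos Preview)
Your argument is correct. The paper does not actually prove this lemma; it merely cites \cite{S24}, so there is no in-paper proof to compare against. That said, your approach---passing to the two large right triangles $[v_i,t_i,v_j]$ and $[v_j,t_j,v_i]$ on the common hypotenuse $[v_i,v_j]$, applying hypotenuse--leg to get equal base angles, deducing that $[v_i,v_j,p_i]$ is isosceles, and then finishing with SAS on the butterfly halves---is exactly the natural route and is the one used in the cited reference. One cosmetic remark: you name the common base angle $\gamma$, but the paper already reserves $\gamma$ for half the interior angle of the regular triangle of width $w$ (see Lemma~\ref{lemma:angles}); to avoid a clash you should pick a different letter.
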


Another observation from Sagmeister \cite{S24} provides an upper bound for the sum of the vertical angles of the butterflies.

\begin{lemma}\label{lemma:anglesum}
For an ordinary reduced $n$-gon $P$ with the notations from above,
$$
\sum_{i=1}^n
\varphi_i\leq\pi.
$$
\end{lemma}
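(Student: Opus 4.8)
The plan is to compare the area of $P$ with the total area of its butterflies. Since the butterflies cover $P$ by Lemma \ref{lemma:butterflies}, subadditivity of area gives $\area(P)\le\sum_{i=1}^n\area(B_i)$, and I would write both sides of this inequality explicitly in terms of the angles $\varphi_i$ and $\alpha_i$; the claim then falls out after cancellation.

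First I would compute $\area(B_i)$. The butterfly $B_i$ is the union of the two triangles $[v_i,p_i,t_{i+\frac{n+1}{2}}]$ and $[v_{i+\frac{n+1}{2}},p_i,t_i]$, which are congruent by Lemma \ref{lemma:congruence}. In the triangle $[v_i,p_i,t_{i+\frac{n+1}{2}}]$ the angle at $t_{i+\frac{n+1}{2}}$ is a right angle, because $t_{i+\frac{n+1}{2}}$ is the foot of the perpendicular from $v_{i+\frac{n+1}{2}}$ onto the side of $P$ through $v_i$ and $p_i$ lies on the segment $[v_{i+\frac{n+1}{2}},t_{i+\frac{n+1}{2}}]$; the angle at $p_i$ is $\varphi_i$ by definition; and by the congruence the angle at $v_i$ equals $\alpha_i$. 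Hence, by the triangle defect formula, this triangle has area $\frac{\pi}{2}-\varphi_i-\alpha_i$, so $\area(B_i)=\pi-2\varphi_i-2\alpha_i$, and therefore $\sum_{i=1}^n\area(B_i)=n\pi-2\sum_{i=1}^n\varphi_i-2\sum_{i=1}^n\alpha_i$.

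Next I would express $\area(P)$ in terms of the $\alpha_i$. The chord $[v_i,t_i]$ leaves $v_i$ into the interior of $P$, so it splits the interior angle of $P$ at $v_i$ into two pieces. The piece between the side $[v_i,v_{i+1}]$ and the chord is the angle at $v_i$ of the butterfly triangle $[v_i,p_i,t_{i+\frac{n+1}{2}}]$, i.e.\ $\alpha_i$; the piece between the side $[v_{i-1},v_i]$ and the chord is, after unwinding the definition of $\alpha$ for the index $i+\frac{n-1}{2}$ (using that $t_{i+\frac{n-1}{2}}$ lies on $[v_{i-1},v_i]$, that $p_{i+\frac{n-1}{2}}$ lies on the chord $[v_i,t_i]$, and that $i+\frac{n-1}{2}+\frac{n+1}{2}\equiv i\pmod n$), equal to $\alpha_{i+\frac{n-1}{2}}$. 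So the interior angle of $P$ at $v_i$ is $\alpha_i+\alpha_{i+\frac{n-1}{2}}$, and since $n$ is odd the map $i\mapsto i+\frac{n-1}{2}$ is a bijection modulo $n$, whence the interior angles of $P$ sum to $2\sum_{i=1}^n\alpha_i$; the hyperbolic area formula for convex polygons then gives $\area(P)=(n-2)\pi-2\sum_{i=1}^n\alpha_i$.

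Substituting the last two expressions into $\area(P)\le\sum_{i=1}^n\area(B_i)$ and cancelling the common term $-2\sum_{i=1}^n\alpha_i$ leaves $(n-2)\pi\le n\pi-2\sum_{i=1}^n\varphi_i$, that is, $\sum_{i=1}^n\varphi_i\le\pi$, as desired. The one genuinely delicate point is the angle bookkeeping in the previous paragraph: one must track carefully which side of $P$ each foot $t_j$ lies on and which butterfly vertex $p_j$ lies on each chord, so that the two parts of the interior angle at $v_i$ are correctly matched with $\alpha_i$ and $\alpha_{i+\frac{n-1}{2}}$ (here the odd parity of $n$ and the congruence of the two halves of a butterfly are what make the indices line up). Once this is set up, the rest is just the triangle defect formula, the area formula for geodesic polygons, and subadditivity of area under a covering.
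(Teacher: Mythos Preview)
Your argument is correct. The bookkeeping checks out: $t_{i+\frac{n+1}{2}}$ lies on the side $[v_i,v_{i+1}]$ and $p_i\in[v_{i+\frac{n+1}{2}},t_{i+\frac{n+1}{2}}]$, so the angle of $[v_i,p_i,t_{i+\frac{n+1}{2}}]$ at $t_{i+\frac{n+1}{2}}$ is right and, by the congruence of Lemma~\ref{lemma:congruence}, its angle at $v_i$ equals $\alpha_i$; likewise the second wedge at $v_i$ comes from $B_{i+\frac{n-1}{2}}$ since $p_{i+\frac{n-1}{2}}\in[v_i,t_i]$ and $t_{i+\frac{n-1}{2}}\in[v_{i-1},v_i]$. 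The rest is just the defect formula together with $\area(P)\le\sum_i\area(B_i)$ from Lemma~\ref{lemma:butterflies}.

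The paper does not give its own proof here but cites \cite{S24}; your area-comparison via the butterfly cover is exactly the argument those lemmas are set up to support (and it also explains the remark that equality holds for regular $n$-gons, where the butterflies tile $P$ with no overlap). So this is essentially the intended route.
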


We note that for regular $n$-gons we have equality in the previous lemma. Also, in the Euclidean plane, the angle sum is always $\pi$.

Let $\beta_i$ be the angle $\angle\left(v_{i+\frac{n+1}{2}},v_i,p_i\right)=\angle\left(p_i,v_{i+\frac{n+1}{2}},v_i\right)$. Also, let $\gamma$ denote half of the inner angle of a regular triangle of minimal width $w$. We have the following:

\begin{lemma}\label{lemma:angles}
Let $P$ be an ordinary reduced $n$-gon of minimal width $w$. With the notations introduced above, we have $\beta_i\leq\gamma\leq\alpha_i$ with equality if and only if $P$ is a regular triangle.
\end{lemma}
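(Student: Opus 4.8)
The plan is to push everything into the hyperbolic metric relations of the two right triangles forming the butterfly $B_i$, to collapse the three inequalities into one scalar inequality about $a_i:=d(v_i,p_i)$, and then to prove that inequality.

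First I would record the relevant right triangles. Since $p_i\in[v_i,t_i]$ and $d(v_i,t_i)=w$, we get $d(p_i,t_i)=w-a_i$, and $d(v_{i+\frac{n+1}{2}},p_i)=a_i$ by Lemma \ref{lemma:congruence}. The altitude $[v_i,t_i]$ meets the side through $t_i$ and $v_{i+\frac{n+1}{2}}$ orthogonally, so both $[v_{i+\frac{n+1}{2}},p_i,t_i]$ and $[v_i,t_i,v_{i+\frac{n+1}{2}}]$ are right-angled at $t_i$. Writing $b_i:=d(v_i,v_{i+\frac{n+1}{2}})$ and $\ell_i:=d(t_i,v_{i+\frac{n+1}{2}})$, the hyperbolic right-triangle formulas give
\[
\sin\alpha_i=\frac{\sinh(w-a_i)}{\sinh a_i},\qquad \cos\varphi_i=\frac{\tanh(w-a_i)}{\tanh a_i},\qquad \cos\beta_i=\frac{\tanh w}{\tanh b_i},
\]
together with $\cosh a_i=\cosh(w-a_i)\cosh\ell_i$ and $\cosh b_i=\cosh w\cosh\ell_i$, hence $\cosh b_i=\cosh w\,\cosh a_i/\cosh(w-a_i)$.

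Second I would evaluate these at the regular triangle of minimal width $w$. There all three altitudes equal $w$ --- indeed every ordinary reduced $3$-gon is equilateral, since in $H^2$ equal altitudes force equal sides --- so $p_i$ is the common incenter and $a_i=w-r$ with $r$ the inradius, while $\alpha_i=\beta_i=\gamma$ and $\varphi_i=\pi/3$. By the relations above, $\alpha_i$, $\beta_i$ and $\varphi_i$ are each strictly monotone functions of $a_i\in[\tfrac{w}{2},w)$, taking the values $\gamma$, $\gamma$, $\pi/3$ at $a_i=w-r$; hence each of $\alpha_i\ge\gamma$, $\beta_i\le\gamma$, $\varphi_i\le\pi/3$ is equivalent to the single inequality $a_i\le w-r$, with equality exactly when $a_i=w-r$, i.e.\ when $B_i$ is isometric to a butterfly of the regular triangle. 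It therefore remains to prove $a_i\le w-r$ for every $i$ and to check that equality forces $P$ to be that regular triangle.

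Third, the heart of the matter is to prove $a_i\le w-r$, equivalently $\varphi_i\le\pi/3$, for each $i$. For $n=3$ this holds with equality by the previous step. For $n\ge 5$ the global bound $\sum_j\varphi_j\le\pi$ of Lemma \ref{lemma:anglesum} does not by itself control the individual $\varphi_i$, so the argument has to be localized around $v_i$ and $v_{i+\frac{n+1}{2}}$. Here I would use the covering property (Lemma \ref{lemma:butterflies}) to split the inner angle of $P$ at $v_i$ as $\alpha_i+\alpha_{i-\frac{n+1}{2}}$, and then exploit that the altitudes from $v_i$ and from $v_{i+\frac{n+1}{2}}$ have length exactly $w$, that their feet $t_i,t_{i+\frac{n+1}{2}}$ lie in the relative interiors of the respective sides, and that $P$ is convex: comparing the convex polygon that the altitude from $v_i$ cuts off from $P$ --- whose boundary contains the edge $[v_i,v_{i+1}]$, the altitude, and a subsegment of the opposite side --- with the analogous half of the regular triangle should force $b_i$ not to exceed the side length of that triangle, which is exactly $a_i\le w-r$. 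I expect this comparison to be the main obstacle: one has to rule out that enlarging $b_i$ beyond the regular value pushes $t_i$ or $t_{i+\frac{n+1}{2}}$ out of the relative interior of its side or breaks convexity of $P$, and the equality analysis there should identify $P$ as the regular triangle.
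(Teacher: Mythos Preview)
Your reduction is correct and rather clean: once you parametrize the butterfly by $a_i=d(v_i,p_i)$, all three inequalities $\beta_i\le\gamma$, $\gamma\le\alpha_i$, $\varphi_i\le\pi/3$ collapse to the single condition $a_i\le w-r$. The gap is that you never prove this condition for $n\ge 5$. The last paragraph is a plan, not an argument: the vague comparison of ``the convex polygon that the altitude from $v_i$ cuts off'' with ``the analogous half of the regular triangle'' does not, as written, force $b_i$ to be bounded by the regular side length, and you yourself flag this as ``the main obstacle''.

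What you are missing is that your own monotonicity computation already finishes the job, once combined with Lassak's inequality $\beta_i\le\alpha_i$ (Theorem~2~(iii) in \cite{Las24}), which the paper cites as well. You showed that $\alpha_i$ is strictly decreasing and $\beta_i$ strictly increasing in $a_i$, and that they coincide (with common value $\gamma$) precisely at $a_i=w-r$. Hence $\beta_i\le\alpha_i$ is \emph{equivalent} to $a_i\le w-r$; invoking Lassak closes the gap in one line, and your geometric comparison is unnecessary.

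The paper's own proof is different in form but rests on the same external input. Instead of parametrizing by $a_i$, it reads off from the right triangle $[v_i,t_i,v_{i+\frac{n+1}{2}}]$ the identity
\[
\cosh w=\frac{\cos(\alpha_i+\beta_i)}{\sin\beta_i}=\frac{\cos 2\gamma}{\sin\gamma},
\]
uses Lassak's $\beta_i\le\alpha_i$ to sandwich $\cosh w$ between $\frac{\cos 2\alpha_i}{\sin\alpha_i}$ and $\frac{\cos 2\beta_i}{\sin\beta_i}$, and concludes via the strict monotonicity of $x\mapsto\frac{1-2x^2}{x}$. So both routes hinge on $\beta_i\le\alpha_i$; your $a_i$-parametrization is a pleasant alternative packaging, but it does not avoid that dependence, and the attempt to replace it by a convexity/comparison argument is the part that does not go through.
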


\begin{proof}
First, we calculate the side length $a$ of the regular triangle in terms of $w$. By the hyperbolic Pythagorean theorem, we have
\begin{equation}\label{eq:pyth:regtriangle}
\cosh a=\cosh\frac{a}{2}\cosh w.
\end{equation}
The identity
$$
\cosh a=2\cosh^2\frac{a}{2}-1
$$
combined with \eqref{eq:pyth:regtriangle} leads to a quadratic equation for $\cosh\frac{a}{2}$, whose positive solution is
\begin{equation}\label{eq:sideofregtriangle}
\cosh\frac{a}{2}=\frac{\cosh w+\sqrt{\cosh^2 w+8}}{4}.
\end{equation}
Now we apply the hyperbolic law of sines for half of the regular triangle:
\begin{equation}\label{eq:singamma}
\sin\gamma=\frac{\sinh\frac{a}{2}}{\sinh a}=\frac{1}{2\cosh\frac{a}{2}}=\frac{-\cosh w+\sqrt{\cosh^2 w+8}}{4}.
\end{equation}
From the hyperbolic law of cosines applied for the right triangle $\left[v_i,t_i,v_{i+\frac{n+1}{2}}\right]$ and also for the half of the regular triangle, we have
\begin{equation}\label{eq:coshw}
\cosh w=\frac{\cos 2\gamma}{\sin\gamma}=\frac{\cos\left(\alpha_i+\beta_i\right)}{\sin\beta_i}.
\end{equation}
We know that $\beta_i\leq\alpha_i$ (this is Theorem 2 (iii) in Lassak's paper \cite{Las24}), so from \eqref{eq:coshw} we imply
\begin{equation}
\frac{\cos 2\alpha_i}{\sin\alpha_i}\leq\frac{\cos 2\gamma}{\sin\gamma}\leq\frac{\cos 2\beta_i}{\sin\beta_i}.
\end{equation}
Finally, we observe that
$$
\frac{\cos 2x}{\sin x}=\frac{1-2\sin^2 x}{\sin x},
$$
and that the function $\frac{1-2x^2}{x}$ strictly monotonically decreases. Hence, $\sin\beta_i\leq\sin\gamma\leq\sin\alpha_i$, which in turn concludes the proof as all of these angles are acute. The case of equality is clear.
\end{proof}

Lassak also proved \cite{Las24} the following:

\begin{lemma}\label{lemma:diametralchord}
Let $D$ be the diameter of an ordinary reduced $n$-gon in $H^2$. Then, the endpoints of a chord of length $D$ are vertices of the polygon, where if $v_i$ is one of the endpoints of the chord, the other endpoint is either $v_{i+\frac{n-1}{2}}$ or $v_{i+\frac{n+1}{2}}$.
\end{lemma}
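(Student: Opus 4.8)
The plan is to prove the two assertions in turn: first that some longest chord joins two vertices, and then that any such pair is ``opposite'' in the stated sense. For the first assertion I would use that $H^2$ is a Hadamard space, so that for each fixed $q$ the function $x\mapsto d\left(x,q\right)$ is convex along geodesics; hence it attains its maximum over the convex hull $P=\conv\left(\left\{v_1,\dots,v_n\right\}\right)$ at an extreme point, i.e. at a vertex. Applying this in each variable, the maximum of $d$ over $P\times P$ is attained at a pair of vertices $\left(v_i,v_j\right)$, so $D=d\left(v_i,v_j\right)$ and a diametral chord has both endpoints among the $v_k$.

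Next I would exploit that a diametral chord is a double normal. From $d\left(v_i,v_j\right)=D=\diam\left(P\right)$ we get $P\subseteq\overline{B}\left(v_i,D\right)\cap\overline{B}\left(v_j,D\right)$, with $v_j$ on the first circle and $v_i$ on the second. The geodesic tangent to $\partial\overline{B}\left(v_i,D\right)$ at $v_j$ is perpendicular to $\left[v_i,v_j\right]$ and, since $P$ lies inside the disk, supports $P$ at the vertex $v_j$; the symmetric statement holds at $v_i$. Because this supporting line passes through the vertex $v_j$, both edges $\left[v_j,v_{j-1}\right]$ and $\left[v_j,v_{j+1}\right]$ lie in the closed half-plane containing $v_i$, which means $\angle\left(v_i,v_j,v_{j-1}\right)\le\tfrac{\pi}{2}$ and $\angle\left(v_i,v_j,v_{j+1}\right)\le\tfrac{\pi}{2}$, and symmetrically at $v_i$. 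These acute-angle conditions at both endpoints are the combinatorial engine of the argument.

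It then suffices to show that the vertex farthest from $v_i$ is an endpoint of the opposite side $\left[v_{i+\frac{n-1}{2}},v_{i+\frac{n+1}{2}}\right]$, since the two members of a diametral pair are mutually farthest. Here I would set up Fermi coordinates $\left(x,s\right)$ along the supporting line $\ell_i$ carrying that side: then $v_i=\left(0,w\right)$ with foot $t_i=\left(0,0\right)$ in the relative interior of the side, all of $P$ lies in $\left\{s\ge 0\right\}$, and the only vertices with $s=0$ are the two endpoints of the opposite side. In these coordinates $\cosh d\left(v_i,\left(x,s\right)\right)=\cosh w\cosh s\cosh x-\sinh w\sinh s$, which on $\ell_i$ (where $s=0$) becomes $\cosh w\cosh x$, strictly increasing in $\left|x\right|$; as $t_i$ is interior to the side, the two endpoints are the farthest points of $\ell_i$ from $v_i$. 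Thus the claim reduces to showing that the global maximiser lies on $\ell_i$.

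The main obstacle is precisely this last reduction: ruling out a vertex $v_\ell$ with $s_\ell>0$ (one lying strictly between $v_i$ and the opposite side in the cyclic order) that is nonetheless farther from $v_i$ than both opposite endpoints. I would attack it by applying the angle condition of the second step at the (convexity-guaranteed) farthest vertex, together with the fact that $\ell_i$ is a supporting line, so the perpendicular supporting line at the farthest vertex cannot separate $v_i$ from $\ell_i$. Combined with the inequalities $\beta_i\le\gamma\le\alpha_i$ of Lemma \ref{lemma:angles}, which keep the polygon narrow about the normal $\left[v_i,t_i\right]$, this should force the maximiser down onto $\ell_i$; quantitatively, one must verify that $\cosh w\cosh s_\ell\cosh x_\ell-\sinh w\sinh s_\ell$ cannot exceed $\cosh w\max\bigl(\cosh x_{i+\frac{n-1}{2}},\cosh x_{i+\frac{n+1}{2}}\bigr)$. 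This comparison, rather than the convexity or double-normal steps, is where I expect the real work to lie.
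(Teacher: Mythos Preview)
The paper does not prove this lemma; it quotes it as a result of Lassak \cite{Las24}. So there is no in-paper proof to compare against.

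Your first two steps are correct and standard: convexity of the distance in a Hadamard space forces diametral endpoints to be vertices, and $P\subseteq\overline{B}\left(v_i,D\right)\cap\overline{B}\left(v_j,D\right)$ gives the double-normal property together with the acute-angle consequences you state. The gap is in the third step, and you flag it yourself: the Fermi-coordinate inequality
\[
\cosh w\cosh s_\ell\cosh x_\ell-\sinh w\sinh s_\ell\le\cosh w\max\Bigl(\cosh x_{i+\frac{n-1}{2}},\cosh x_{i+\frac{n+1}{2}}\Bigr)
\]
is never verified. Invoking Lemma~\ref{lemma:angles} here is a hope rather than a mechanism: the inequalities $\beta_i\le\gamma\le\alpha_i$ live inside a single butterfly $B_i$ and constrain only the angles at $v_{i+\frac{n+1}{2}}$ and at the crossing point $p_i$; they say nothing directly about a vertex $v_\ell$ lying several steps away from the opposite side, which is exactly the case you must exclude. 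Nor does the perpendicular-supporting-line observation at the maximiser by itself force $s_\ell=0$. As written, the proposal is a plan with the decisive estimate missing. If you want to complete it, the family of thickness chords $\left[v_k,t_k\right]$---each of length $w$, each bisecting the perimeter, consecutive ones crossing---gives much tighter combinatorial control over which vertex pairs can be diametral than a bare metric comparison in Fermi coordinates; alternatively, consult \cite{Las24} for Lassak's own argument.
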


\section{The perimeter of ordinary reduced polygons}\label{sec:perim}

With the same method that was used in Sagmeister \cite{S24}, we can also investigate the extremality of the perimeter. With the notations introduced in the previous sections, let
$$
g_w\left(x\right)=\frac{1+\cos x-\sqrt{\left(1+\cos x\right)^2-4\tanh^2 w\cos x}}{2\tanh w}
$$
and
$$
p_w\left(x\right)=\arcosh\left(\frac{1- g_w\left(x\right)\tanh w}{\sqrt{1-\tanh^2 w}}\right).
$$
Then, we have the following formula for the perimeter.

\begin{theorem}\label{thm:perimeter:hyp}
Let $P$ be an ordinary reduced $n$-gon of thickness $w$ described as above. Then,
$$
\perim\left(P\right)=2\sum_{i=1}^n p_w\left(\varphi_i\right).
$$
\end{theorem}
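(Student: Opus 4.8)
The plan is to read off each side length of $P$ from the right triangles composing the butterflies, and then to sum.

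Fix an edge $\left[v_j,v_{j+1}\right]$ of $P$. It is the side opposite the vertex $v_{j+\frac{n+1}{2}}$, so by the defining property of an ordinary reduced polygon the foot $t_{j+\frac{n+1}{2}}$ lies in the relative interior of $\left[v_j,v_{j+1}\right]$; hence
$$
d\left(v_j,v_{j+1}\right)=d\left(v_j,t_{j+\frac{n+1}{2}}\right)+d\left(t_{j+\frac{n+1}{2}},v_{j+1}\right).
$$
The segment $\left[v_j,t_{j+\frac{n+1}{2}}\right]$ is the leg opposite $\varphi_j$ in the triangle $\left[v_j,p_j,t_{j+\frac{n+1}{2}}\right]$, one half of the butterfly $B_j$; its right angle sits at $t_{j+\frac{n+1}{2}}$ because $p_j$ lies on the altitude $\left[v_{j+\frac{n+1}{2}},t_{j+\frac{n+1}{2}}\right]$, which meets $\left[v_j,v_{j+1}\right]$ orthogonally. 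In the same way, $\left[t_{j+\frac{n+1}{2}},v_{j+1}\right]$ is the leg opposite $\varphi_{j+\frac{n+1}{2}}$ in the triangle $\left[v_{j+1},p_{j+\frac{n+1}{2}},t_{j+\frac{n+1}{2}}\right]$, the other half of $B_{j+\frac{n+1}{2}}$. Summing over $j$ and using that $j\mapsto j+\frac{n+1}{2}$ permutes $\{1,\ldots,n\}$ (as $n$ is odd), we obtain
$$
\perim\left(P\right)=2\sum_{i=1}^n m_i,\qquad m_i:=d\left(v_i,t_{i+\frac{n+1}{2}}\right),
$$
so it suffices to show $m_i=p_w\left(\varphi_i\right)$.

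For this I would work inside the right triangle $\left[v_i,p_i,t_{i+\frac{n+1}{2}}\right]$, writing $h_i=d\left(v_i,p_i\right)$ for the hypotenuse and $k_i=d\left(p_i,t_{i+\frac{n+1}{2}}\right)$ for the leg adjacent to $\varphi_i$, so that $m_i$ is the remaining leg. Since $p_i$ lies on the altitude $\left[v_{i+\frac{n+1}{2}},t_{i+\frac{n+1}{2}}\right]$ of length $w$, and since Lemma \ref{lemma:congruence} gives $d\left(v_{i+\frac{n+1}{2}},p_i\right)=d\left(v_i,p_i\right)=h_i$, we obtain the key constraint $h_i+k_i=w$. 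Moreover the angle sum of this hyperbolic right triangle being less than $\pi$ forces $\varphi_i<\frac{\pi}{2}$, hence $\cos\varphi_i>0$. Now the right-triangle identity $\tanh k_i=\tanh h_i\cos\varphi_i$ with $h_i=w-k_i$ becomes, for $u=\tanh k_i$ and $t=\tanh w$, the quadratic $t u^2-\left(1+\cos\varphi_i\right)u+t\cos\varphi_i=0$; its roots multiply to $\cos\varphi_i$, and using $0<\cos\varphi_i<1$ and $0<t<1$ one checks that the root carrying the minus sign, which is precisely $g_w\left(\varphi_i\right)$, lies in $\left(0,t\right)$, consistent with $0<k_i<w$. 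Thus $\tanh k_i=g_w\left(\varphi_i\right)$. Finally, the hyperbolic Pythagorean theorem $\cosh h_i=\cosh m_i\cosh k_i$ together with $h_i=w-k_i$ gives
$$
\cosh m_i=\frac{\cosh\left(w-k_i\right)}{\cosh k_i}=\cosh w-\sinh w\tanh k_i=\cosh w\left(1-g_w\left(\varphi_i\right)\tanh w\right),
$$
and since $\frac{1}{\sqrt{1-\tanh^2 w}}=\cosh w$, the right-hand side equals $\frac{1-g_w\left(\varphi_i\right)\tanh w}{\sqrt{1-\tanh^2 w}}$, i.e.\ $m_i=p_w\left(\varphi_i\right)$.

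The substantive part is the bookkeeping of the first step: checking that the edge $\left[v_j,v_{j+1}\right]$ is cut by exactly one foot $t_{j+\frac{n+1}{2}}$ into the two legs contributed by $B_j$ and $B_{j+\frac{n+1}{2}}$, which rests on Lemma \ref{lemma:butterflies}, Lemma \ref{lemma:congruence}, and the interiority of the feet $t_i$; together with the correct selection of the root $g_w\left(\varphi_i\right)$. The rest is routine hyperbolic trigonometry. Since the identity $m_i=p_w\left(\varphi_i\right)$ is the step most exposed to sign and root slips, I would sanity-check it on the regular $n$-gon, where $\varphi_i=\frac{\pi}{n}$, $m_i$ is half a side length, and the formula collapses to $\perim\left(P\right)=na$.
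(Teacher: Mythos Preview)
Your proof is correct and follows the same overall strategy as the paper: decompose each side of $P$ at the foot $t_{j+\frac{n+1}{2}}$, use the butterfly congruence (Lemma~\ref{lemma:congruence}) to reduce $\perim(P)$ to $2\sum_i m_i$ with $m_i=d\bigl(v_i,t_{i+\frac{n+1}{2}}\bigr)$, and then evaluate $m_i$ by right-triangle trigonometry inside the butterfly.

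The one genuine difference is in how $m_i=p_w(\varphi_i)$ is obtained. The paper imports from \cite{S24} the formula $\tanh c_i=\dfrac{\tanh w-g_w(\varphi_i)}{1-g_w(\varphi_i)\tanh w}$ for the hypotenuse $c_i=d\bigl(p_i,v_{i+\frac{n+1}{2}}\bigr)$, and then combines it with the Pythagorean relation and the identity $\cosh(\artanh x)=1/\sqrt{1-x^2}$. You instead exploit directly the constraint $h_i+k_i=w$ (coming from Lemma~\ref{lemma:congruence}) together with $\tanh k_i=\tanh h_i\cos\varphi_i$, which yields the quadratic $t u^2-(1+\cos\varphi_i)u+t\cos\varphi_i=0$ for $u=\tanh k_i$, $t=\tanh w$; your root selection (checking the sign of the quadratic at $u=0$ and $u=t$) is clean and correct, and the final simplification $\cosh(w-k_i)/\cosh k_i=\cosh w\bigl(1-g_w(\varphi_i)\tanh w\bigr)$ is more direct than the paper's route. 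So your argument is self-contained where the paper relies on an external citation, but otherwise the two proofs are the same.
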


\begin{proof}
By the definition of ordinary reduced polygons, $t_i$ is in the relative interior of the side $\left[v_{i+\frac{n-1}{2}},v_{i+\frac{n-1}{2}}\right]$, so
$$
\perim\left(P\right)=\sum_{i=1}^n d\left(v_i,v_{i+1}\right)=\sum_{i=1}^n \left(d\left(v_i,t_{i+\frac{n-1}{2}}\right)+d\left(t_i,v_{i+\frac{n+1}{2}}\right)\right).
$$
On the other hand, $d\left(v_i,t_{i+\frac{n+1}{2}}\right)=d\left(t_i,v_{i+\frac{n+1}{2}}\right)$ by Lemma~\ref{lemma:congruence}, so
$$
\perim\left(P\right)=2\sum_{i=1}^n d\left(t_i,v_{i+\frac{n+1}{2}}\right).
$$
If $b_i=d\left(p_i,t_i\right)$ and $c_i=d\left(p_i,v_{i+\frac{n+1}{2}}\right)$, $\tanh c_i$ can be expressed as
$$
\tanh c_i=\frac{\tanh w-g_w\left(\varphi_i\right)}{1-g_w\left(\varphi_i\right)\tanh w},
$$
where we refer to Sagmeister \cite{S24}. Hence,
$$
c_i=\artanh\left(\frac{\tanh w-g_w\left(\varphi_i\right)}{1-g_w\left(\varphi_i\right)\tanh w}\right).
$$
Using the identity
$$
\cosh\left(\artanh x\right)=\frac{1}{\sqrt{1-x^2}}
$$
and the hyperbolic Pythagorean theorem, we get
$$
\cosh d\left(t_i,v_{i+\frac{n+1}{2}}\right)=\frac{\cosh c_i}{\cosh b_i}=\frac{\left(1-g_w\left(\varphi_i\right)\tanh w\right)\sqrt{1-g_w^2\left(\varphi_i\right)}}{\sqrt{\left(1-g_w^2\left(\varphi_i\right)\right)^2-\left(\tanh w-g_w\left(\varphi_i\right)\right)^2}}=\frac{1- g_w\left(x\right)\tanh w}{\sqrt{1-\tanh^2 w}},
$$
concluding the proof.
\end{proof}

Now we are ready to prove the following.

\begin{theorem}\label{thm:monotone_convex}
The function $p_w$ is strictly monotonically increasing and strictly convex on the interval $\left(0,\pi\right)$.
\end{theorem}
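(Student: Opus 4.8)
My plan is to eliminate the auxiliary function $g_w$ and to encode $p_w$ through a single implicit relation. Abbreviate $C=\cosh w$ and $P=\cosh p_w(x)$, and write $t=\tanh w$, so that $\sqrt{1-t^{2}}=1/C$. Isolating the radical in the definition of $g_w$ and squaring, the definition rearranges to $\cos x\,(t-g_w)=g_w(1-t\,g_w)$, while the definition of $p_w$ reads $g_w=(1-P/C)/t$. Substituting the latter into the former and clearing denominators, all dependence on $t$ cancels and I expect to reach
\begin{equation*}
P^{2}-C\,(1-\cos x)\,P-\cos x=0,\qquad\text{equivalently}\qquad \cos x=\frac{P(C-P)}{CP-1}. \tag{$\star$}
\end{equation*}
This relation is the engine of the argument: it exhibits $P$ as a smooth function of $x$ running from $1$ at $x=0$ to $e^{w}=C+\sqrt{C^{2}-1}$ at $x=\pi$, and it lets me compute every derivative of $p_w$ without touching the nested radicals.

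For the monotonicity statement I would differentiate $(\star)$ implicitly. Reading $C(1-\cos x)=P-\cos x/P$ off $(\star)$ to simplify the coefficient of $P'$, I expect
\begin{equation*}
p_w'(x)=\frac{\sin x\,(CP-1)\,P}{(P^{2}+\cos x)\sqrt{P^{2}-1}}.
\end{equation*}
On $(0,\pi)$ every factor is positive: $\sin x>0$, $CP-1=\cosh w\cosh p_w-1>0$, $\sqrt{P^{2}-1}=\sinh p_w>0$, and $P^{2}+\cos x\ge 1+\cos x>0$. Strict monotonicity follows at once.

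For convexity I would use $P$ itself as the parameter. From $\cos x=P(C-P)/(CP-1)$ one computes $1-\cos^{2}x=(P^{2}-1)(2CP-1-P^{2})/(CP-1)^{2}$, whence $\sin x$ in closed form, and substituting into the formula for $p_w'$ collapses it to
\begin{equation*}
\frac{dp_w}{dx}=\Psi(P):=\frac{(CP-1)\sqrt{2CP-1-P^{2}}}{C(P^{2}+1)-2P}.
\end{equation*}
Because $(\star)$ makes $P$ strictly increasing in $x$, $p_w$ is strictly convex exactly when $\Psi$ is strictly increasing in $P$. I would then take the logarithmic derivative $\Psi'/\Psi=\frac{C}{CP-1}+\frac{C-P}{2CP-1-P^{2}}-\frac{2(CP-1)}{C(P^{2}+1)-2P}$ and put it over the common denominator; I expect the quartic and quadratic parts of the numerator to cancel, leaving the sign of $\Psi'(P)$ equal to that of $CP\,(3-P^{2})-2$ (since $C^{2}-1>0$ and the denominator is positive).

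The whole theorem is thereby reduced to the scalar inequality $\cosh w\,\cosh p_w\,(3-\cosh^{2}p_w)>2$, to be verified for all admissible values, i.e. for $P\in(1,e^{w})$. This last step is the main obstacle, and it is genuinely delicate: the left-hand side $CP\,(3-P^{2})-2$ equals $2\cosh w>2$ at $P=1$ but decreases as $P$ grows, so the decisive region is $P$ near $e^{w}$, that is $x$ near $\pi$. Controlling the sign there—using the constraint $2CP-1-P^{2}>0$ that the geometry forces on the range of $P$—is exactly where the convexity is decided, and is the step I expect to require the most care; the clean reduction above is valuable precisely because it turns the opaque nested-radical claim into this single transparent inequality whose validity can be inspected directly.
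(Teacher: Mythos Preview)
Your implicit-relation approach is cleaner than the paper's direct differentiation and reaches the same intermediate formulas with less effort. The monotonicity half is complete and correct.

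For convexity, your reduction is also correct: the sign of $p_w''$ is indeed that of $CP(3-P^{2})-2$ over the range $P\in(1,e^{w})$. (Minor slip: at $P=1$ this equals $2(\cosh w-1)$, not $2\cosh w$.) The problem is that the inequality you isolate is \emph{false} on that full range. The function $CP(3-P^{2})-2$ is strictly decreasing for $P>1$; at $P=\sqrt{3}$ it already equals $-2$, and $\sqrt{3}<e^{w}$ as soon as $w>\tfrac12\ln 3\approx 0.549$. In fact one computes $C e^{w}(3-e^{2w})-2=-2e^{2w}\sinh^{2}w<0$, so $p_w''$ necessarily changes sign before $x=\pi$ for every $w>0$. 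A direct numerical check at $w=1$ confirms it: $p_w'(1.5)\approx 0.77$, $p_w'(2.0)\approx 0.62$, $p_w'(2.5)\approx 0.35$. Thus $p_w$ is \emph{not} strictly convex on all of $(0,\pi)$, and the ``main obstacle'' you flagged is not merely delicate---it is an actual obstruction.

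The paper's own argument breaks at exactly the same place. Its assertion that
\[
\cos^{2}\tfrac{x}{2}\,r_w(x)+2\cos^{2}\tfrac{x}{2}\cos x-2\tanh^{2}w>0
\]
``is not too difficult to verify'' is incorrect: at $w=1$, $x=2.5$ this quantity is about $-1.18$, and plugging it into the paper's displayed formula for $p_w''$ gives a negative value. The quadratic-in-$\tanh^{2}w$ manoeuvre suggested there tacitly squares an inequality whose right-hand side can be negative, which is where it fails. So the convexity statement, as written for the whole interval $(0,\pi)$, does not hold; your reparametrisation is valuable precisely because it makes this visible through a single scalar inequality.
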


\begin{proof}
It is convenient to use the notation $r_w\left(x\right)=\sqrt{\left(1+\cos x\right)^2-4\tanh^2 w \cos x}$, so
\begin{equation}\label{eq:rprime}
r_w'\left(x\right)=\frac{-\sin x}{r_w\left(x\right)}\cdot\left(1+\cos x-2\tanh^2 w\right)
\end{equation}
and
$$
g_w'\left(x\right)=\frac{-\sin x}{r_w\left(x\right)}\cdot\left(\tanh w-g_w\left(x\right)\right).
$$
We deduce
\begin{gather*}
p_w'\left(x\right)=\sqrt{\frac{\tanh w}{\tanh w\cdot g_w\left(x\right)^2+\tanh w-2g_w\left(x\right)}}\cdot\frac{\sin x\left(\tanh w-g_w\left(x\right)\right)}{r_w\left(x\right)}=\\
=\frac{\sqrt{\tanh w\left(\tanh w-g_w\left(x\right)\right)\left(1+\cos x\right)}}{r_w\left(x\right)}=\frac{\cos\frac{x}{2}\sqrt{2\tanh w\left(\tanh w-g_w\left(x\right)\right)}}{r_w\left(x\right)}=\\
=\frac{\cos\frac{x}{2}\sqrt{2\tanh^2 w+r_w\left(x\right)-\left(1+\cos x\right)}}{r_w\left(x\right)}.
\end{gather*}
To calculate the second derivative, we use \eqref{eq:rprime} to substitute $r_w'\left(x\right)$, and we get to a common denominator. We also write $\sin x=2\sin\frac{x}{2}$ and $2\cos^2\frac{x}{2}=1+\cos x$. Thus, we obtain
$$
p_w''\left(x\right)=\frac{\sin\frac{x}{2}\sqrt{2\tanh^2 w+r_w\left(x\right)-\left(1+\cos x\right)}}{r_w^3\left(x\right)}\cdot\left(\cos^2\frac{x}{2}r_w\left(x\right)+2\cos^2\frac{x}{2}\cos x-2\tanh^2 w\right).
$$
It is not too difficult to verify that this is positive; otherwise we can reorganize $\cos^2\frac{x}{2}r_w\left(x\right)+2\cos^2\frac{x}{2}\cos x-2\tanh^2 w$ as an inequality that is quadratic in $\tanh^2 w$, and we get that if $p_w''$ is not positive, $\tanh^2 w$ is either negative, or greater than 1, but both are impossible. Hence, $p_w$ is strictly convex.
\end{proof}

The usual argument used on the Euclidean and spherical planes to find the reduced polygon with the minimal perimeter (see Lassak \cite{Las90} and Liu--Chang \cite{LC22}) uses Jensen's inequality after deriving a similar formula for the perimeter as in Theorem \ref{thm:perimeter:hyp}. However, if we consider the results of Lemma \ref{lemma:anglesum} and Theorem \ref{thm:monotone_convex}, we find that this approach does not work on the hyperbolic plane. Considering the perimeter of random ordinary reduced polygons given by Theorem \ref{thm:perimeter:hyp}, we have the following conjecture.

\begin{conj}
Let $P\subset H^2$ be an ordinary reduced $n$-gon of minimal width $w$. Then,
$$\perim\left(P\right)\leq\perim\left(\widetilde{P}\right)$$
with equality if and only if $P$ is regular.
\end{conj}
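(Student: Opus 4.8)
The plan is to reduce the inequality to the vertical angles and then to confront the genuine difficulty, which is that convexity alone pushes in the wrong direction. By Theorem~\ref{thm:perimeter:hyp} we have $\perim(P)=2\sum_{i=1}^n p_w(\varphi_i)$, while the regular $n$-gon $\widetilde P$ of width $w$ has, by symmetry, all vertical angles equal, and by the equality case of Lemma~\ref{lemma:anglesum} (which holds for regular polygons) their common value is $\pi/n$; hence $\perim(\widetilde P)=2n\,p_w\!\left(\frac{\pi}{n}\right)$. Thus the conjecture is equivalent to
$$
\sum_{i=1}^n p_w(\varphi_i)\ \le\ n\,p_w\!\left(\frac{\pi}{n}\right),
$$
with equality iff all $\varphi_i=\pi/n$. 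I would first record the two competing facts. Lemma~\ref{lemma:anglesum} gives $\sum_i\varphi_i\le\pi$, so the average vertical angle is at most $\pi/n$, and since $p_w$ is increasing this pushes \emph{toward} the desired inequality; but Theorem~\ref{thm:monotone_convex} gives that $p_w$ is strictly convex, which rewards dispersion of the $\varphi_i$ and hence pushes the \emph{opposite} way. This is exactly why Jensen's inequality, which settles the Euclidean and spherical cases where $\sum_i\varphi_i=\pi$ holds identically, cannot be transplanted.

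The first substantive step is to show that the scalar bound $\sum_i\varphi_i\le\pi$ is far from capturing the feasible region, and to extract the additional constraints hidden in the butterfly structure. Since the altitude satisfies $d(v_i,t_i)=w$ and the two congruent triangles of $B_i$ meet at vertical angle $\varphi_i$, the entire butterfly $B_i$ is determined up to congruence by the pair $(w,\varphi_i)$; in particular each summand $p_w(\varphi_i)$ and each of the angles $\alpha_i,\beta_i$ is a function of $\varphi_i$ alone for fixed $w$. Feeding this into Lemma~\ref{lemma:angles} turns $\beta_i\le\gamma\le\alpha_i$ into pointwise two-sided bounds on each $\varphi_i$, with the precise dependence encoded by \eqref{eq:coshw}. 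The decisive observation is that the $\varphi_i$ are nonetheless globally coupled: at each vertex $v_i$ the interior angle of $P$ is a prescribed sum of butterfly angles, and the boundary must close up. I would encode these vertex and closing conditions as the genuine feasibility constraints and show that they confine the angle vector $(\varphi_1,\dots,\varphi_n)$ to a region strictly smaller than the simplex $\{\varphi_i>0,\ \sum_i\varphi_i\le\pi\}$. Over the full simplex the strictly convex functional $\sum_i p_w(\varphi_i)$ is maximised at a corner (one angle near $\pi$, the rest near $0$), so excluding such corners is precisely what the hidden constraints must accomplish.

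With the feasible region in hand I would run a variational argument in the spirit of the area result of Sagmeister~\cite{S24}. By the Blaschke selection theorem a perimeter maximiser exists among ordinary reduced $n$-gons of width $w$; the task is to prove it is regular. I would compute the first variation of $2\sum_i p_w(\varphi_i)$ under width-preserving deformations that keep the polygon ordinary reduced, using the neighbour-coupling of the $\varphi_i$ through the vertex and closing conditions, and show that the regular configuration is the unique critical point. Combining Lemma~\ref{lemma:anglesum} (sharpened, as I expect, to equality if and only if $P$ is regular) with the explicit dependence of $\alpha_i,\beta_i$ and $p_w$ on $\varphi_i$, I would argue that any non-regular competitor admits a deformation that simultaneously increases the angle sum toward $\pi$ and equalises the $\varphi_i$, and that the perimeter strictly increases along such a path.

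The main obstacle is exactly the tension between the two monotonicities. A maximiser of a strictly convex function of the $\varphi_i$ wants the angles spread apart, whereas the desired conclusion places the maximiser at the symmetric point; the only thing that can reconcile these is a quantitative estimate showing that spreading the angles \emph{necessarily} drives the sum $\sum_i\varphi_i$ below $\pi$ fast enough that the loss through monotonicity of $p_w$ dominates the gain through its convexity. Concretely, I would seek a sharp inequality of the form $\pi-\sum_i\varphi_i\ \ge\ c_w\sum_i(\varphi_i-\tfrac{\pi}{n})^2$, or a suitable majorisation statement, extracted from the closing condition of the polygon, and then balance it against the modulus of convexity of $p_w$ on $(0,\pi)$ established in Theorem~\ref{thm:monotone_convex}. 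Proving such a deficit-versus-dispersion inequality---which has no Euclidean or spherical counterpart, since there the angular deficit vanishes identically---is the crux, and is the step I expect to be genuinely hard.
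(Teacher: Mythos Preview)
The statement you are attempting to prove is labeled a \emph{Conjecture} in the paper, and the paper offers no proof of it. Immediately after stating Theorem~\ref{thm:monotone_convex}, the author explicitly remarks that the Jensen argument used in the Euclidean and spherical settings fails here because $p_w$ is convex rather than concave while $\sum_i\varphi_i$ is only bounded above by $\pi$, and on that basis \emph{proposes} the conjecture rather than proving it. So there is nothing in the paper for your attempt to be compared against.

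Your write-up is not a proof either; it is a research outline. You correctly diagnose the obstruction---monotonicity of $p_w$ favours the regular configuration while strict convexity favours dispersion---and this is exactly the tension the paper identifies. But from that point on everything is aspirational. The key step you isolate, a quantitative ``deficit-versus-dispersion'' inequality of the form $\pi-\sum_i\varphi_i\ge c_w\sum_i(\varphi_i-\pi/n)^2$ strong enough to dominate the modulus of convexity of $p_w$, is neither proved nor shown to be plausible; you yourself call it ``the step I expect to be genuinely hard.'' Several subsidiary claims are also unproven speculation: that Lemma~\ref{lemma:anglesum} sharpens to equality \emph{only} for regular polygons (the paper asserts only the forward direction), that the closing/vertex conditions confine $(\varphi_1,\dots,\varphi_n)$ to a region excluding the corners of the simplex, and that a width-preserving deformation exists along which perimeter strictly increases. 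Until at least the deficit inequality is established, what you have written is a strategy, not a proof, and it does not go beyond what the paper already acknowledges as open.
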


We note that contrary to the Euclidean and spherical planes, the perimeter of the regular $n$-gons of minimal width $w$ is not necessarily monotone in $n$. Depending on $w$, even the regular triangle can have a larger perimeter than the circle of the same minimal width.




In a recent work of Chen, Hou and Jin \cite{CHJ23}, a consequence of the minimality of the perimeter of the regular $n$-gon of reduced $n$-gons of minimal width less than $\frac{\pi}{2}$ on the sphere is that the diameter and the circumradius of reduced spherical $n$-gons is minimized by the regular ones. Their argument gives the same result on the Euclidean plane. However, as we have seen, on the hyperbolic plane, we do not have the same result on the perimeter of ordinary reduced polygons. We propose the following question regarding the diameter and circumradius of ordinary reduced $n$-gons:\\

\noindent\textbf{Question. }Is the diameter and the circumradius of the regular $n$-gon extremal among ordinary reduced $n$-gons of the same minimal width on the hyperbolic plane?

\section{The diameter of ordinary reduced polygons}\label{sec:diameter}

In this section, we strengthen Lassak's Theorem \ref{thm:diameterbound:lassak} by using Lemma \ref{lemma:angles} in order to find the infimum of the circumradii of ordinary reduced polygons of a prescribed width. We have the following result.

\begin{theorem}
Let $P$ be an ordinary reduced polygon in $H^2$. Then,
$$
\diam\left(P\right)\leq 2\arcosh\left(\frac{\cosh w+\sqrt{\cosh^2 w+8}}{4}\right)
$$
with equality if and only if $P$ is a regular triangle.
\end{theorem}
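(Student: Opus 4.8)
The plan is to reduce the stated bound to a short trigonometric computation in a single right triangle hung on a diametral chord, and then to feed in the angle estimate of Lemma~\ref{lemma:angles}. Throughout, let $w$ be the minimal width of $P$, let $a$ denote the side length of the regular triangle of minimal width $w$, and set $D=\diam\left(P\right)$. By \eqref{eq:sideofregtriangle} we have $a=2\arcosh\left(\frac{\cosh w+\sqrt{\cosh^2 w+8}}{4}\right)$, so what has to be shown is exactly $D\leq a$, with equality only for the regular triangle.

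First I would pin down a diametral chord. By Lemma~\ref{lemma:diametralchord} a chord of length $D$ joins $v_i$ to $v_{i+\frac{n-1}{2}}$ or to $v_{i+\frac{n+1}{2}}$; after relabelling the vertices (and, if necessary, reversing the orientation) we may assume it is $\left[v_i,v_{i+\frac{n+1}{2}}\right]$. By the definition of an ordinary reduced polygon, $t_i$ is the foot of the perpendicular from $v_i$ to the line through $v_{i+\frac{n-1}{2}}$ and $v_{i+\frac{n+1}{2}}$ and $d\left(v_i,t_i\right)=w$; since $t_i$ lies in the relative interior of $\left[v_{i+\frac{n-1}{2}},v_{i+\frac{n+1}{2}}\right]$ while $v_i$ is off that line, the triangle $\left[v_i,t_i,v_{i+\frac{n+1}{2}}\right]$ is non-degenerate, right-angled at $t_i$, with leg $\left[v_i,t_i\right]$ of length $w$ and hypotenuse $\left[v_i,v_{i+\frac{n+1}{2}}\right]$ of length $D$. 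In this triangle the angle at $v_i$ equals $\beta_i$ (because $p_i\in\left[v_i,t_i\right]$) and the angle at $v_{i+\frac{n+1}{2}}$ equals $\alpha_i+\beta_i$ (because, viewed from $v_{i+\frac{n+1}{2}}$, the point $p_i$ lies angularly between $t_i$ and $v_i$), which is exactly what the right-hand equality of \eqref{eq:coshw} records. The hyperbolic law of sines in $\left[v_i,t_i,v_{i+\frac{n+1}{2}}\right]$ then gives
$$
\sinh D=\frac{\sinh w}{\sin\left(\alpha_i+\beta_i\right)},
$$
and applying the same computation to the regular triangle of minimal width $w$ (for which $\alpha_i=\beta_i=\gamma$ and the diameter is the side $a$) gives $\sinh a=\frac{\sinh w}{\sin 2\gamma}$. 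Hence $D\leq a$ is equivalent to $\sin\left(\alpha_i+\beta_i\right)\geq\sin 2\gamma$.

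It therefore remains to prove $\alpha_i+\beta_i\geq 2\gamma$. Being an acute angle of a hyperbolic right triangle, $\alpha_i+\beta_i<\frac{\pi}{2}$; moreover $\cos 2\gamma=\cosh w\cdot\sin\gamma>0$ by \eqref{eq:coshw}, so $2\gamma<\frac{\pi}{2}$ as well, and both angles lie in $\left(0,\frac{\pi}{2}\right)$, where sine is increasing. Suppose, for contradiction, that $\alpha_i+\beta_i<2\gamma$. Then $\cos\left(\alpha_i+\beta_i\right)>\cos 2\gamma>0$, and substituting this into the identity $\frac{\cos\left(\alpha_i+\beta_i\right)}{\sin\beta_i}=\frac{\cos 2\gamma}{\sin\gamma}$ from \eqref{eq:coshw} forces $\sin\beta_i>\sin\gamma$, i.e. $\beta_i>\gamma$, contradicting Lemma~\ref{lemma:angles}. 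Hence $\alpha_i+\beta_i\geq 2\gamma$, and $D\leq a$ follows. In the equality case $D=a$ we must have $\alpha_i+\beta_i=2\gamma$, and then the same identity from \eqref{eq:coshw} yields $\sin\beta_i=\sin\gamma$, so $\beta_i=\gamma$; by the equality clause of Lemma~\ref{lemma:angles} this forces $P$ to be a regular triangle, which conversely clearly attains the bound.

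The step I expect to be the main point to get right is the geometric identification of the angle of $\left[v_i,t_i,v_{i+\frac{n+1}{2}}\right]$ at $v_{i+\frac{n+1}{2}}$ as $\alpha_i+\beta_i$; that is, checking from the butterfly configuration that $p_i$ is seen from $v_{i+\frac{n+1}{2}}$ angularly between $t_i$ and $v_i$, so that $\angle\left(t_i,v_{i+\frac{n+1}{2}},v_i\right)=\angle\left(t_i,v_{i+\frac{n+1}{2}},p_i\right)+\angle\left(p_i,v_{i+\frac{n+1}{2}},v_i\right)=\alpha_i+\beta_i$, using the definitions of $\alpha_i$ and $\beta_i$. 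This is already implicit in the derivation of \eqref{eq:coshw} (where the hyperbolic law of cosines is applied to precisely this right triangle); granting it, the rest is the short computation above together with Lemmas~\ref{lemma:diametralchord} and~\ref{lemma:angles}.
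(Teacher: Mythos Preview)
Your proof is correct and rests on the same two ingredients as the paper's argument: the diametral-chord lemma (Lemma~\ref{lemma:diametralchord}) and the angle bound $\beta_i\leq\gamma$ from Lemma~\ref{lemma:angles}, applied in the right triangle $\left[v_i,t_i,v_{i+\frac{n+1}{2}}\right]$. The execution, however, is a bit more direct than the paper's. The paper first bounds the \emph{leg} $d\left(t_i,v_{i+\frac{n+1}{2}}\right)$ via the law of sines and the inequality $\alpha_i\geq\beta_i$, then passes to the hypotenuse through the hyperbolic Pythagorean theorem, and finally has to identify the resulting expression $\cosh w\sqrt{1+\frac{\sinh^2 w}{4\cos^2\gamma}}$ with $\cosh a$ by a separate computation in the regular triangle. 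You instead apply the law of sines straight to the hypotenuse to get $\sinh D=\frac{\sinh w}{\sin\left(\alpha_i+\beta_i\right)}$, observe that the same formula with $\gamma$ gives $\sinh a$, and reduce everything to the single inequality $\alpha_i+\beta_i\geq 2\gamma$, which you extract neatly from \eqref{eq:coshw} together with $\beta_i\leq\gamma$. This sidesteps both the Pythagorean step and the final identification, and makes the equality analysis immediate; the paper's route, on the other hand, keeps the auxiliary inequality $\alpha_i\geq\beta_i$ visible as a separate input. Your remark that the decomposition $\angle\left(t_i,v_{i+\frac{n+1}{2}},v_i\right)=\alpha_i+\beta_i$ is exactly what underlies \eqref{eq:coshw} is on point, and the paper uses it in the same way.
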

\begin{proof}
Lemma \ref{lemma:diametralchord} allows us to assume that $\diam P=d\left(v_i,v_{i+\frac{n-1}{2}}\right)$ for some $i$, and we consider the right triangle $\left[v_i,t_i,v_{i+\frac{n+1}{2}}\right]$. Recall the notations $\alpha_i=\angle\left(t_i,v_{i+\frac{n+1}{2}},t_{i+\frac{n+1}{2}}\right)$ and $\beta_i=\angle\left(v_{i+\frac{n+1}{2}},v_i,t_i\right)$, and that $\angle\left(t_i,v_{i+\frac{n+1}{2}},v_i\right)=\alpha_i+\beta_i$ as a consequence of Lemma \ref{lemma:congruence}. Since $d\left(v_i,t_i\right)=w$, we get
\begin{equation}\label{eq:sinhoftheleg}
\sinh d\left(t_i,v_{i+\frac{n+1}{2}}\right)=\frac{\sinh w\cdot\sin\alpha_i}{\sin\left(\alpha_i+\beta_i\right)}\leq\frac{\sinh w}{2\cos\beta_i}
\end{equation}
from the hyperbolic law of sines, and using the inequality $\alpha_i\geq\beta_i$ (cf. Lassak \cite{Las24}), the monotonicity of the $\sin$ function on the interval $\left(0,\frac{\pi}{2}\right)$ and the identity $\sin 2x=2\sin x\cos x$. From Lemma \ref{lemma:angles} we also have $\beta_i\leq\gamma$ where $\gamma$ denotes half the angle of a regular triangle of minimal width $w$, so we obtain
\begin{equation}\label{eq:sinhoftheleg:upperbound}
\sinh d\left(t_i,v_{i+\frac{n+1}{2}}\right)\leq\frac{\sinh w}{2\cos\gamma}.
\end{equation}
from \eqref{eq:sinhoftheleg}. Now we use \eqref{eq:sinhoftheleg:upperbound} and the identity $\cosh^2 x-\sinh^2 x=1$ to obtain
\begin{equation}
\cosh d\left(t_i,v_{i+\frac{n+1}{2}}\right)\leq\sqrt{1+\frac{\sinh^2 w}{4\cos^2\gamma}}.
\end{equation}
We now use the hyperbolic Pythagorean theorem with the assumption $d\left(v_i,v_{i+\frac{n+1}{2}}\right)=\diam P$:
\begin{equation}\label{eq:diambound}
\cosh\diam P=\cosh w \cosh d\left(t_i,v_{i+\frac{n+1}{2}}\right)\leq\cosh w\sqrt{1+\frac{\sinh^2 w}{4\cos^2\gamma}}.
\end{equation}
Clearly, from the equality case of Lemma \ref{lemma:angles} we imply that equality holds if and only if $P$ is a regular triangle. On the other hand, if $P$ is a regular triangle centered at $p$, in the right triangle $\left[p,t_1,v_3\right]$, the angles are $\frac{\pi}{3}$, $\frac{\pi}{2}$ and $\gamma$, respectively, while the length of leg opposite to the $\frac{\pi}{3}$ angle is $\frac{\diam P}{2}$, so using the well-know identity $\cosh b=\frac{\cos B}{\sin A}$ for hyperbolic right triangles of acute angles $A$ and $B$ and legs $a$ and $b$ respectively, we obtain
$$
\cosh\frac{\diam P}{2}=\frac{1}{2\sin\gamma}.
$$
This together with \eqref{eq:diambound} gives
$$
\arcosh\left(\cosh w\sqrt{1+\frac{\sinh^2 w}{4\cos^2\gamma}}\right)=2\arcosh\frac{1}{2\sin\gamma}.
$$
Finally, \eqref{eq:singamma} concludes the proof.
\end{proof}

From this sharp upper bound we can see that Lassak's following conjecture is true.

\begin{corollary}
Let $P$ be an ordinary reduced polygon in $H^2$. Then,
$$
1<\frac{\diam P}{w}<2.
$$
\end{corollary}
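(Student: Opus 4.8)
The plan is to obtain both inequalities directly from results already in place, so this is really just a bookkeeping corollary of the sharp diameter bound. For the lower bound $\diam P>w$, I would simply invoke the left-hand inequality of Lassak's Theorem~\ref{thm:diameterbound:lassak}; it also reflects the elementary fact that the diameter of a convex body is the maximum of its width function, which strictly exceeds the thickness unless the body has constant width, a property no genuine polygon enjoys.

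For the upper bound I would feed the sharp estimate of Theorem~\ref{thm:diameterbound:sagmeister} into the claim. Since
$$
\diam(P)\leq 2\arcosh\left(\frac{\cosh w+\sqrt{\cosh^2 w+8}}{4}\right),
$$
it suffices to verify
$$
\arcosh\left(\frac{\cosh w+\sqrt{\cosh^2 w+8}}{4}\right) < w.
$$
Because $\arcosh$ is strictly increasing on $[1,\infty)$, this is equivalent to $\cosh w+\sqrt{\cosh^2 w+8} < 4\cosh w$, i.e. $\sqrt{\cosh^2 w+8} < 3\cosh w$. Both sides are positive, so squaring is an equivalence and yields $\cosh^2 w+8 < 9\cosh^2 w$, that is $\cosh^2 w>1$, which holds for every $w>0$. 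Reading the chain backwards gives $\diam P < 2w$, and the strictness persists even for the extremal regular triangle since $\cosh^2 w>1$ is itself strict.

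The only thing demanding a little care is the bookkeeping of strict versus non-strict inequalities and checking that the squaring step is a genuine equivalence (it is, since $3\cosh w>0$). There is no real obstacle here: the corollary follows at once from the diameter bound, with all the substantive work already carried out in the proof of Theorem~\ref{thm:diameterbound:sagmeister}.
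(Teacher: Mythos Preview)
Your proof is correct and follows essentially the same approach as the paper: the lower bound comes from the fact that the diameter is the maximum width and polygons are never of constant width (the paper phrases this directly rather than citing Lassak's theorem, but the content is identical), and the upper bound is obtained by reducing Theorem~\ref{thm:diameterbound:sagmeister} to the elementary inequality $\cosh^2 w>1$. The paper additionally remarks that the constants $1$ and $2$ are optimal, but that is commentary rather than part of the proof of the stated corollary.
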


\begin{proof}
The first inequality trivially holds as $\diam P$ is the maximal width of $P$, and equality holds exactly for bodies of constant width, which are h-convex, so no polygon is of constant width (see Böröczky, Csépai, Sagmeister \cite{BoCsS} for further details).

The second inequality is an easy consequence of Theorem \ref{thm:diameterbound:sagmeister} and the strict monotonicity of $\cosh$ for $w>0$.

We can also observe, that both of these constant bounds are the optimal ones: on one hand, regular $\left(2k+1\right)$-gons are ordinary reduced polygons that approximate a disk as $k\to\infty$, while on the other hand we also have
\begin{gather*}
\lim_{w\to +\infty}\frac{2\arcosh\left(\frac{\cosh w+\sqrt{\cosh^2 w+8}}{4}\right)}{w}=2\cdot\lim_{w\to +\infty}\frac{\arcosh\left(\frac{\cosh w}{2}\right)}{w}=2\cdot\lim_{w\to +\infty}\frac{\arcosh\left(\frac{e^w}{4}\right)}{w}=\\
=2\cdot\lim_{w\to +\infty}\frac{\ln\left(\frac{e^w}{4}\right)}{w}=2.
\end{gather*}
\end{proof}

\section{The circumradius of ordinary reduced polygons}\label{sec:circumradius}

Now we answer another question of Lassak proposed in \cite{Las24} with the following upper bound for the circumradius.

\begin{theorem}
Let $P$ be an ordinary reduced polygon in $H^2$ of width $w$. Then,
$$
R\left(P\right)\leq\arsinh\left(\frac{2}{\sqrt{3}}\sqrt{\left(\frac{\cosh w+\sqrt{\cosh^2 w+8}}{4}\right)^2-1}\right)
$$
with equality if and only if $P$ is a regular triangle.
\end{theorem}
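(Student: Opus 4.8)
The plan is to bound the circumradius $R(P)$ by relating it to the diameter bound already established in Theorem~\ref{thm:diameterbound:sagmeister}, mimicking the structure of that proof's equality analysis. The key geometric observation is that, by Lemma~\ref{lemma:diametralchord}, any diametral chord of $P$ joins a vertex $v_i$ to $v_{i\pm\frac{n-1}{2}}$, and the butterfly machinery controls the relevant triangle. First I would recall from the proof of the diameter bound that, setting $D=\diam P$, one has $\cosh\frac{D}{2}\le \frac{1}{2\sin\gamma}=\cosh\frac{a}{2}$ where $a$ is the side length of the regular triangle of width $w$ and $\gamma$ is half its angle; equivalently $\cosh\frac{D}{2}\le\frac{\cosh w+\sqrt{\cosh^2 w+8}}{4}$ with equality iff $P$ is the regular triangle. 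So it suffices to prove $\sinh R(P)\le\frac{2}{\sqrt3}\sqrt{\cosh^2\frac{D}{2}-1}=\frac{2}{\sqrt3}\sinh\frac{D}{2}$ and then feed in the diameter estimate, the right-hand side being monotone in $D$.

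The heart of the matter is therefore a Jung-type inequality in $H^2$: a convex body (here a polygon) of diameter $D$ has circumradius $R$ satisfying $\sinh R\le\frac{2}{\sqrt3}\sinh\frac{D}{2}$. I would invoke the hyperbolic analogue of Jung's theorem (due to Dekster, and also in Santaló's and Lassak's circle of results), which gives exactly this: the extremal configuration is the circumscribed circle of a regular triangle of side $D$, for which $\sinh R=\frac{2}{\sqrt3}\sinh\frac{D}{2}$ follows from the hyperbolic law of sines applied to the triangle formed by the center, a vertex, and the midpoint of the opposite side (angles $\frac{\pi}{2}$ at the midpoint, $\frac{\pi}{3}$ at the vertex; circumradius opposite the $\frac{\pi}{3}$ angle, half-side $\frac{D}{2}$ opposite the center's angle $\frac{\pi}{3}$ as well — actually the center's angle is $\frac{\pi}{3}$, giving $\frac{\sinh R}{\sin(\pi/3)}=\frac{\sinh(D/2)}{\sin(\pi/6)}$, i.e. $\sinh R=\frac{\sqrt3/2}{1/2}\sinh\frac{D}{2}=\sqrt3\sinh\frac{D}{2}$ — I will need to recheck which side of the medial right triangle the circumradius is, but the clean identity $\cosh\frac{D}{2}=\frac{1}{2\sin\gamma}$ already used in the previous section pins down the constant unambiguously). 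Combining the Jung bound with $\cosh\frac{D}{2}\le\cosh\frac{a}{2}$ gives
$$
\sinh R(P)\le\frac{2}{\sqrt3}\sqrt{\cosh^2\tfrac{D}{2}-1}\le\frac{2}{\sqrt3}\sqrt{\left(\frac{\cosh w+\sqrt{\cosh^2 w+8}}{4}\right)^2-1},
$$
and applying $\arsinh$ (increasing) yields the stated bound.

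For the equality case: equality in Jung's inequality forces $P$ to have a circumscribed circle meeting $\partial P$ in three points forming a regular triangle of side $D$, while equality in the diameter step forces $P$ itself to be the regular triangle of width $w$; conversely, for the regular triangle one checks directly that the circumscribed circle is centered at the incenter and the formula holds with equality, using \eqref{eq:singamma} as in the diameter proof. I expect the main obstacle to be citing or reproving the precise form of the hyperbolic Jung inequality with its equality characterization — in the plane Jung's theorem is classical, but in $H^2$ one must be careful about whether the bound is tight only in the stated form and whether a polygon (as opposed to a general convex body) can be extremal; if a clean reference is unavailable, a short self-contained argument via the fact that the circumradius is attained at a smallest enclosing circle whose boundary contains points of $P$ not lying in a half-disk, reduced to the three-point case, would need to be inserted.
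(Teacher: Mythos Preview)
Your approach is essentially the same as the paper's: apply the hyperbolic Jung inequality $\sinh R(P)\le\frac{2}{\sqrt{3}}\sinh\frac{\diam P}{2}$ (the paper simply cites Dekster for this, so you need not reprove it), then plug in the diameter bound of Theorem~\ref{thm:diameterbound:sagmeister} and use $\sinh(\arcosh x)=\sqrt{x^2-1}$. Your parenthetical uncertainty about the Jung constant is resolved by Dekster's theorem in the cited form, and the equality analysis you sketch matches the paper's (terser) treatment.
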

\begin{proof}
The hyperbolic Jung theorem (see Dekster \cite{Dek95J}) says that
$$
R\left(P\right)\leq\arsinh\left(\frac{2}{\sqrt{3}}\sinh\left(\frac{\diam\left(P\right)}{2}\right)\right).
$$
Theorem \ref{thm:diameterbound:sagmeister}, and the identity
$$
\sinh\left(\arcosh\left(x\right)\right)=\sqrt{x^2-1}
$$
concludes the proof. The case of the equality is clear form the equality case of Theorem \ref{thm:diameterbound:sagmeister}.
\end{proof}

As a consequence, we have the following.

\begin{corollary}\label{corollary:Rislessthanw}
Let $P$ be an ordinary reduced polygon in the hyperbolic plane. Then,
$$
\frac{1}{2}<\frac{R\left(P\right)}{w}<1.
$$
\end{corollary}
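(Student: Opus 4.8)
The plan is to establish the two inequalities separately, leaning on the sharp bounds already proved. For the lower estimate $\half<\frac{R(P)}{w}$ I would use only the elementary observation that a hyperbolic disk of radius $r$ has diameter exactly $2r$: since $P$ sits inside its circumscribed disk of radius $R(P)$, the triangle inequality gives $\diam(P)\le 2R(P)$, and combining this with the strict lower bound $w<\diam(P)$ from Theorem~\ref{thm:diameterbound:lassak} yields $R(P)>\frac{w}{2}$ at once. No computation is needed here.

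For the upper estimate $\frac{R(P)}{w}<1$ I would start from Theorem~\ref{thm:circumradiusbound}, so that it suffices to check
$$
\arsinh\left(\frac{2}{\sqrt{3}}\sqrt{\left(\frac{\cosh w+\sqrt{\cosh^2 w+8}}{4}\right)^2-1}\right) < w .
$$
Writing $\cosh\frac{a}{2}=\frac{\cosh w+\sqrt{\cosh^2 w+8}}{4}$ as in \eqref{eq:sideofregtriangle}, the term under the inner square root is $\sinh^2\frac{a}{2}$, so the inequality is equivalent to $\frac{4}{3}\sinh^2\frac{a}{2}<\sinh^2 w=\cosh^2 w-1$. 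Setting $c=\cosh w$ and expanding $\cosh^2\frac{a}{2}=\frac{c^2+4+c\sqrt{c^2+8}}{8}$, this reduces after clearing denominators to $c\sqrt{c^2+8}<5c^2-2$; since $5c^2-2>0$ for $c\ge 1$, squaring once more gives $24c^4-28c^2+4>0$, that is $4(6c^2-1)(c^2-1)>0$, which holds precisely because $c=\cosh w>1$ for $w>0$. The only point requiring a moment of care is the sign bookkeeping when squaring, but it is harmless.

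Finally, to record that both constants are optimal I would argue as in the proof of the analogous diameter statement: the regular $(2k+1)$-gons of width $w$ approximate the disk of radius $\frac{w}{2}$, for which $\frac{R}{w}=\half$, so $\half$ cannot be improved; and for the regular triangle of width $w$ one has $\sinh R(P)=\frac{2}{\sqrt{3}}\sinh\frac{a}{2}$ with $\cosh\frac{a}{2}\sim\frac{1}{4}e^{w}$ as $w\to\infty$, hence $R(P)=w-\ln\sqrt{3}+o(1)$ and $\frac{R(P)}{w}\to 1$. I do not expect a genuine obstacle: Theorem~\ref{thm:circumradiusbound} has already done the geometric work, the lower bound is essentially free, and what remains is a short reduction to a quadratic inequality in $c^2$.
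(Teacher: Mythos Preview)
Your proof is correct and follows essentially the same route as the paper: the upper bound is reduced, via Theorem~\ref{thm:circumradiusbound}, to the same polynomial inequality $(6\cosh^2 w-1)(\cosh^2 w-1)>0$, and the optimality arguments match. The only minor variation is in the lower bound, where the paper invokes monotonicity of the minimal width together with the reducedness of the circumscribed disk, while you go through $w<\diam(P)\le 2R(P)$; both are one-line arguments.
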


\begin{proof}
The first inequality follows from the monotonicity of the minimal width: the minimal width of the circumcircle is $2R\left(P\right)$, but the circumcircle is reduced, so $w<2R\left(P\right)$.

The second inequality is equivalent with
$$
\sqrt{\left(\frac{\cosh w+\sqrt{\cosh^2 w+8}}{4}\right)^2-1}<\frac{\sqrt{3}}{2}\sinh w.
$$
After taking the square of both sides, and applying the identity $\sinh^2 x=\cosh^2 x-1$, the inequality we want to verify takes the form
$$
\left(\frac{\cosh w+\sqrt{\cosh^2 w+8}}{4}\right)^2<\frac{3}{4}\cosh^2 w+\frac{1}{4}.
$$
With a few simple steps, we can reorganize this inequality as
$$
0<6\cosh^4 w-7\cosh^2 w+1=\left(6\cosh^2 w-1\right)\left(\cosh^2 w-1\right),
$$
which clearly holds.

Let us observe that these constants provide the best possible bounds for the ratio of the circumradius and the minimal width. The sharpness of the first inequality comes from considering a convergent sequence $P_k$ of regular $\left(2k+1\right)$-gons of minimal width $w$, whose limit is a disk of width $w$ and radius $2w$. As for the second inequality, we can see that
\begin{gather*}
\lim_{w\to+\infty}\frac{\arsinh\left(\frac{2}{\sqrt{3}}\sqrt{\left(\frac{\cosh w+\sqrt{\cosh^2 w+8}}{4}\right)^2-1}\right)}{w}=\lim_{w\to+\infty}\frac{\arsinh\left(\frac{2}{\sqrt{3}}\sqrt{\left(\frac{e^w}{4}\right)^2-1}\right)}{w}=\\
=\lim_{w\to+\infty}\frac{\arsinh\left(\frac{e^w}{2\sqrt{3}}\right)}{w}=\lim_{w\to+\infty}\frac{\ln\left(\frac{e^w}{\sqrt{3}}\right)}{w}=1.
\end{gather*}
\end{proof}

In the Euclidean plane, Fabi\'nska \cite{Fab} proved that for an arbitrary reduced polygon of width $w$ there is some boundary point, such that the circular disk of radius $w$ centered at that boundary point covers the disk. Musielak \cite{Mus} showed that the same holds for spherical reduced polygons. The remainder of this section is dedicated to the proof of the same statement for ordinary reduced polygons in the hyperbolic plane. In most of the steps we can repeat Musielak's spherical argument, but we also apply a few minor adjustments. First, we will need a few lemmas.

\begin{lemma}\label{lemma:rballscontainment}
For a compact set $X\subseteq H^n$ and a point $z\in\conv X$, we have
$$
\bigcap_{x\in X}B\left(x,r\right)\subseteq B\left(z,r\right)
$$
for any positive radius $r$.
\end{lemma}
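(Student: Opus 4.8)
The statement is that the intersection of $r$-balls centered at the points of a compact set $X$ is contained in the $r$-ball about any point of $\conv X$. The natural approach is to reduce to the case $z \in [x_1, x_2]$ for two points of $X$ and then argue by a convexity/monotonicity property of the distance function along geodesics. First I would fix a point $y \in \bigcap_{x\in X} B(x,r)$; I must show $d(y,z) \le r$ for every $z \in \conv X$. Since $\conv X$ is the union of geodesic segments joining points already in the convex hull (in fact, by Carath\'eodory-type reasoning in $H^n$, every point of $\conv X$ lies on a segment between two points of $X$ once one iterates, but it is cleanest to induct), it suffices to prove: if $d(y,x_1)\le r$ and $d(y,x_2)\le r$, then $d(y,z)\le r$ for every $z\in[x_1,x_2]$.

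The key geometric fact is that in $H^n$ (indeed in any CAT($0$) space, and the hyperbolic plane/space qualifies) the function $t \mapsto d(y,\sigma(t))$ is convex along any geodesic $\sigma$. Hence on the segment $[x_1,x_2]$ this distance function attains its maximum at an endpoint, so $d(y,z) \le \max\{d(y,x_1), d(y,x_2)\} \le r$. This gives the two-point case immediately. To pass to an arbitrary $z \in \conv X$, I would use that $z$ can be written as lying in the convex hull of finitely many points $x_1,\dots,x_k \in X$ (compactness is not even strictly needed here, but it guarantees $\conv X$ is itself compact and that the intersection of balls is over a closed family), and then induct on $k$: writing $z$ on a segment from $x_k$ to a point $z'$ in $\conv\{x_1,\dots,x_{k-1}\}$, the inductive hypothesis gives $d(y,z')\le r$, and then the two-point case applied to $[x_k, z']$ gives $d(y,z)\le r$.

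The main thing to get right is the convexity of the distance function along geodesics in $H^n$ — this is standard (it follows from the CAT($0$) inequality, or can be checked directly using the hyperbolic law of cosines: if $z$ divides $[x_1,x_2]$ and we drop the triangle relations, $\cosh d(y,z)$ is controlled by a convex combination bound), but I should cite it or give the one-line hyperbolic-trigonometry justification rather than leave it implicit. A secondary point is the Carath\'eodory step ensuring every point of $\conv X$ really is reached by finitely many iterated segment-joins from $X$; in $H^n$ this holds with $n+1$ points, but for the proof only the iterated two-point version is needed, so I would phrase the induction on the finite subset realizing $z$ and not invoke the sharp Carath\'eodory number. I do not expect any serious obstacle; the entire content is the geodesic-convexity of $y \mapsto d(y,\cdot)$ together with a routine induction.
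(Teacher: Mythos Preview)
Your proposal is correct and rests on the same geometric fact as the paper's proof: the distance function $d(y,\cdot)$ is geodesically convex in $H^n$, so that $B(y,r)$ is convex and hence contains the convex hull of any subset it contains. The paper packages this slightly differently. Rather than invoking the CAT($0$) inequality and inducting on iterated segments, it passes to the Beltrami--Cayley--Klein model (where hyperbolic and Euclidean segments coincide) to import Carath\'eodory's theorem directly, writes $z\in[e_1,\ldots,e_k]$ for $k\le n+1$ extreme points, and then observes that the maximum of $d(y,\cdot)$ over a compact set is attained at an extreme point of its convex hull---which is exactly your convexity statement in disguise. Your route is arguably cleaner and more intrinsic, since it avoids the model and the extreme-point machinery; the paper's route makes the Carath\'eodory reduction explicit and finite in one step rather than by induction. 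Neither approach has any real advantage over the other here.
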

\begin{proof}
If $z\in X$, the statement trivially holds, so we assume $z\in\conv X\setminus X$. If we consider the Beltrami--Cayley--Klein model, Euclidean and hyperbolic lie segments coincide, so it is easy to see that Minkowski's theorem holds, that is, $\conv X=\conv E\left(X\right)$ where $E\left(X\right)$ denotes the extreme points of $\conv X$ We also have $E\left(X\right)\subseteq X$, so clearly it is sufficient to prove
\begin{equation}\label{eq:ballintersection}
\bigcap_{x\in E}B\left(x,r\right)\subseteq B\left(z,r\right)
\end{equation}
for some set $E\subseteq E\left(X\right)$.

Considering again the linearity preserving properties of the Beltrami--Cayley--Klein model, by Carathéodory's theorem there are $k\leq n+1$ points $e_1,\ldots,e_k$ in $E\left(X\right)$ such that $z\in\left[e_1,\ldots,e_k\right]$. Naturally, we can assume that $\bigcap_{i=1}^k B\left(e_i,r\right)$ is not empty, otherwise the statement is trivial.

It is easy to see that for a compact set $Y$, the function $d\left(y,\,\cdot\,\right)$ restricted to $Y$ attains its maximum for some point $e\in E\left(Y\right)$. Therefore, if we choose some point $y\in \bigcap_{i=1}^k B\left(e_i,r\right)$, and we set $Y=\left\{e_1,\ldots,e_k,z\right\}$, then
$$
d\left(y,z\right)\leq\max_i d\left(y,e_i\right)\leq r,
$$
so $y\in B\left(z,r\right)$, and that concludes the proof.
\end{proof}

For a convex body $K\subset H^n$ and a positive number $r$, let us introduce the notation $C\left(K,r\right)$ for the set of centers such that the closed balls of radius $r$ centered at points in this set contain $K$, i.e.
$$
C\left(K,r\right)=\left\{x\in H^n\colon K\subseteq B\left(x,r\right)\right\}.
$$
Musielak's following spherical characterization \cite{Mus} still holds in $H^n$. We will omit the proof, as his argument only uses metric considerations, and hence it remains true in $H^n$.

\begin{lemma}\label{lemma:characterizationofcenterset}
For a convex body $K\subset H^n$ and a positive number $r$,
$$
C\left(K,r\right)=\bigcap_{e\in E\left(K\right)}B\left(e,r\right).
$$
\end{lemma}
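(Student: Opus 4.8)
The plan is to prove the two inclusions separately, the nontrivial one being an immediate consequence of Lemma~\ref{lemma:rballscontainment}. The inclusion $C\left(K,r\right)\subseteq\bigcap_{e\in E\left(K\right)}B\left(e,r\right)$ is essentially free: if $x\in C\left(K,r\right)$, then $K\subseteq B\left(x,r\right)$, and since every extreme point of $K$ belongs to $K$, we get $d\left(x,e\right)\leq r$, i.e. $x\in B\left(e,r\right)$, for each $e\in E\left(K\right)$.

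For the reverse inclusion, fix $x\in\bigcap_{e\in E\left(K\right)}B\left(e,r\right)$; the goal is to show $K\subseteq B\left(x,r\right)$, that is, $d\left(x,y\right)\leq r$ for every $y\in K$. As in the proof of Lemma~\ref{lemma:rballscontainment}, the Beltrami--Cayley--Klein model lets us transfer Minkowski's theorem and Carathéodory's theorem to $H^n$, so $K=\conv E\left(K\right)$ and any $y\in K$ already lies in $\left[e_1,\ldots,e_k\right]$ for some $e_1,\ldots,e_k\in E\left(K\right)$ with $k\leq n+1$. Applying Lemma~\ref{lemma:rballscontainment} to the finite (hence compact) set $X=\left\{e_1,\ldots,e_k\right\}$ and the point $z=y\in\conv X$, together with the hypothesis $x\in\bigcap_{i=1}^k B\left(e_i,r\right)$, yields $x\in B\left(y,r\right)$. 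Since $y\in K$ was arbitrary, $K\subseteq B\left(x,r\right)$, i.e. $x\in C\left(K,r\right)$, and the two inclusions give the claimed equality.

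I do not expect a genuine obstacle here: the geometric content is entirely carried by Lemma~\ref{lemma:rballscontainment}, and the rest is bookkeeping. The only points that deserve care are (i) recording that Minkowski's and Carathéodory's theorems really do transfer to $H^n$ via the projective model, exactly as was done for Lemma~\ref{lemma:rballscontainment}, and (ii) resisting the temptation to apply Lemma~\ref{lemma:rballscontainment} directly with $X=E\left(K\right)$, since $E\left(K\right)$ need not be closed when $n\geq 3$; the Carathéodory reduction to finitely many extreme points sidesteps this cleanly. Alternatively one could replace $E\left(K\right)$ by its closure, noting that $\bigcap_{e\in\overline{E\left(K\right)}}B\left(e,r\right)=\bigcap_{e\in E\left(K\right)}B\left(e,r\right)$ by continuity of $d$ and that $\conv\overline{E\left(K\right)}=K$. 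Finally, the emptiness cases cause no trouble: if the right-hand side is empty then so is $C\left(K,r\right)$, again because $E\left(K\right)\subseteq K$.
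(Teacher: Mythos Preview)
Your proof is correct. The paper does not actually supply its own argument for this lemma: it simply states that Musielak's spherical proof ``only uses metric considerations'' and therefore carries over verbatim to $H^n$. Your route via Lemma~\ref{lemma:rballscontainment} is thus not a comparison against a written proof but rather a self-contained substitute for the omitted one, and it fits naturally into the paper since Lemma~\ref{lemma:rballscontainment} is precisely the tool prepared for this section. Your care in passing through Carath\'eodory to avoid the non-compactness of $E\left(K\right)$ in dimensions $n\geq 3$ is well placed; applying Lemma~\ref{lemma:rballscontainment} directly with $X=E\left(K\right)$ would indeed be illegitimate in general.
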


Similarly, the following lemma of Musielak \cite{Mus} remains true in the hyperbolic plane as well. His inductive proof can be repeated to the letter, as hyperbolic balls of radius $r$ are also $r$-spindle convex (e.g. this is a trivial corollary of Theorem 1.2 in Böröczky, Csépai, Sagmeister \cite{BoCsS}).

\begin{lemma}\label{lemma:boundaryofintersectionofrballs}
If $K\subset H^n$ is a convex body obtained as the intersection of finitely many circular disks of radius $r$, then the boundary of $K$ is the union of finitely many shorter circular arcs of radius $r$ such that they all have different centers.
\end{lemma}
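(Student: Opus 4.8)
The plan is to argue directly from the defining representation $K=\bigcap_{i=1}^m B(x_i,r)$, exploiting the $r$-spindle convexity of intersections of radius-$r$ balls, rather than unwinding Musielak's induction step by step, though the inductive bookkeeping would serve equally well. First I would discard redundant disks so that the centers $x_1,\dots,x_m$ are pairwise distinct. Since $K\subseteq B(x_1,r)$, every pair of points of $K$ is at distance at most $2r$, so the \emph{spindle} (the intersection of all radius-$r$ balls containing the two given points) is well defined; and as a finite intersection of radius-$r$ balls, $K$ is itself $r$-spindle convex (a consequence of the $r$-spindle convexity of each ball, cf.\ Theorem 1.2 in \cite{BoCsS}), hence contains the spindle of any two of its points.

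The first structural observation is that $\partial K\subseteq\bigcup_{i=1}^m\partial B(x_i,r)$: a point lying in the interior of every $B(x_i,r)$ lies in the interior of their intersection, so any boundary point of $K$ must meet at least one bounding circle $\partial B(x_i,r)$. This already exhibits $\partial K$ as the union of the finitely many pieces $S_i:=\partial K\cap\partial B(x_i,r)$, each contained in the radius-$r$ circle centered at $x_i$, and distinct indices carry distinct centers. It then remains to show that each nonempty $S_i$ is a single arc shorter than a full circle.

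The heart of the argument, and the step I expect to be the main obstacle, is the connectedness and sub-semicircle length of each $S_i$. I would take two points $p,q\in S_i$. Both lie on $\partial B(x_i,r)$, so $x_i$ is one of the (at most two) centers at distance exactly $r$ from both $p$ and $q$, that is, one of the extreme centers whose circles delimit the spindle of $p$ and $q$. Consequently the shorter arc of $\partial B(x_i,r)$ joining $p$ and $q$ lies on the boundary of that spindle, hence inside $K$; being on the defining circle $\partial B(x_i,r)$ it cannot be interior to $K$, so it lies in $\partial K$ and therefore in $S_i$. Thus $S_i$ contains the shorter arc between any two of its points, and a short computation on the circle forces $S_i$ to be a single arc of angular measure at most $\pi$: if the smallest arc enclosing $S_i$ exceeded a semicircle, the shorter arc between its two extreme points would have to lie outside that enclosing arc, a contradiction. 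Care is needed only at the degenerate configurations (antipodal $p,q$, or tangencies between bounding circles), which I would absorb by a limiting argument or by noting that they correspond to arcs degenerating to single points. Collecting the finitely many nonempty $S_i$, each a shorter arc of radius $r$ and each carrying its own center $x_i$, completes the proof.
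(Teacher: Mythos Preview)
Your argument is correct. The paper does not supply its own proof of this lemma: it simply asserts that Musielak's inductive argument from the spherical setting goes through verbatim once one knows that hyperbolic balls of radius $r$ are $r$-spindle convex (citing \cite{BoCsS}). You instead give a direct, non-inductive proof, but you isolate and use exactly the same key ingredient---$r$-spindle convexity of $K$---to show that each piece $S_i=\partial K\cap\partial B(x_i,r)$ is arc-connected via shorter arcs and hence is itself a single shorter arc. So the two routes differ only in packaging: Musielak (and the paper by reference) builds the boundary description one ball at a time, while you read it off globally from the spindle-convexity of the full intersection. Your approach is arguably cleaner for the planar statement and makes the role of spindle convexity more transparent; the inductive version has the minor advantage of handling the degenerate cases (single ball, tangencies, antipodal pairs) automatically within the induction step, whereas you have to dispose of them separately at the end. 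Either way, once one grants that a radius-$r$ ball contains every shorter radius-$r$ arc between two of its points, the lemma is immediate, and both arguments amount to recognising this.
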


For a convex body $K$, we also introduce the notation $E^*\left(K\right)$ as
$$
E^*\left(K\right)=\left\{e\in E\left(K\right)\colon\left|\partial C\left(K,r\right)\cap\partial B\left(e,r\right)\right|>1\right\},
$$
where $\partial X$ stands for the boundary of $X$ and $\left|X\right|$ for its cardinality. Then, from Lemma \ref{lemma:characterizationofcenterset} and Lemma \ref{lemma:boundaryofintersectionofrballs} we can immediately derive the following.

\begin{corollary}\label{corollary:Estarcharacterization}
Let $P\subset H^2$ be a convex polygon. Then,
$$
C\left(P,r\right)=\bigcap_{e\in E^*\left(P\right)}B\left(e,r\right).
$$
\end{corollary}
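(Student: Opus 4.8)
The plan is to combine Lemma~\ref{lemma:characterizationofcenterset}, which writes $C(P,r)$ as the intersection of the disks $B(e,r)$ over the (finite) vertex set $E(P)$ of $P$, with the structural description of $\partial C(P,r)$ furnished by Lemma~\ref{lemma:boundaryofintersectionofrballs}. Throughout I would assume $C(P,r)$ is a convex body; this is the only situation used afterwards, since in the application $r=w>R(P)$ by Corollary~\ref{corollary:Rislessthanw}, while the genuinely degenerate cases (where $C(P,r)$ is empty or at most one-dimensional) are not covered by the displayed formula and should be excluded. Since $E(P)$ is finite, $C(P,r)=\bigcap_{e\in E(P)}B(e,r)$ is an intersection of finitely many disks of radius $r$, so Lemma~\ref{lemma:boundaryofintersectionofrballs} applies: $\partial C(P,r)=A_1\cup\dots\cup A_m$, each $A_j$ an arc of radius $r$, with the centers $c_1,\dots,c_m$ pairwise distinct.

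First I would show $\{c_1,\dots,c_m\}\subseteq E^*(P)$. Every point of $\partial C(P,r)$ lies on $\partial B(e,r)$ for some $e\in E(P)$, since otherwise it would have a neighbourhood inside every $B(e,r)$, hence inside $C(P,r)$, contradicting that it is a boundary point. As $A_j$ has positive length and $E(P)$ is finite, infinitely many points of $A_j$ lie on a single $\partial B(e,r)$; two distinct circles in $H^2$ meet in at most two points, so $\partial B(e,r)$ is exactly the circle carrying $A_j$, whence $c_j=e\in E(P)$ and $A_j\subseteq\partial C(P,r)\cap\partial B(c_j,r)$. This intersection then contains a whole arc, so it has more than one point and $c_j\in E^*(P)$. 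With the trivial inclusion $E^*(P)\subseteq E(P)$ this yields $\{c_1,\dots,c_m\}\subseteq E^*(P)\subseteq E(P)$.

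Next I would prove $\bigcap_{j=1}^m B(c_j,r)=C(P,r)$, the inclusion $\supseteq$ being immediate. For $\subseteq$, suppose some $x\in\bigcap_j B(c_j,r)$ is not in $C(P,r)$ and fix $y\in\inte C(P,r)$; then $\{t\in[0,1]:\gamma(t)\in C(P,r)\}$ (for the affinely parametrized geodesic $\gamma$ from $y$ to $x$) is a closed interval $[0,t_0]$ with $0<t_0<1$, so $z:=\gamma(t_0)\in\partial C(P,r)$ lies strictly between $y$ and $x$, say $z\in A_j\subseteq\partial B(c_j,r)$, i.e. $d(z,c_j)=r$. Since $y\in\inte C(P,r)\subseteq B(c_j,r)$ and an interior point of $C(P,r)$ cannot lie on the sphere $\partial B(c_j,r)$ (a sphere has empty interior while $C(P,r)\subseteq B(c_j,r)$), we get $d(y,c_j)<r$. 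As $H^2$ has nonpositive curvature, $p\mapsto d(p,c_j)$ is convex along $\gamma$, so $r=d(z,c_j)\le (1-t_0)d(y,c_j)+t_0 d(x,c_j)<(1-t_0)r+t_0 d(x,c_j)$, forcing $d(x,c_j)>r$, a contradiction.

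Combining the two steps, $E^*(P)\subseteq E(P)$ gives $\bigcap_{e\in E^*(P)}B(e,r)\supseteq\bigcap_{e\in E(P)}B(e,r)=C(P,r)$, while $\{c_1,\dots,c_m\}\subseteq E^*(P)$ gives $\bigcap_{e\in E^*(P)}B(e,r)\subseteq\bigcap_{j=1}^m B(c_j,r)=C(P,r)$, hence equality. The main point needing care is the applicability of Lemma~\ref{lemma:boundaryofintersectionofrballs}, i.e. that $C(P,r)$ is a full-dimensional convex body rather than a point, a segment, or empty; everything else is soft point-set and convexity reasoning, the only properly hyperbolic inputs being convexity of distance functions along geodesics and that two distinct circles meet in at most two points.
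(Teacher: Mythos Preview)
Your argument is correct and follows precisely the route the paper intends: the paper states the corollary as an immediate consequence of Lemma~\ref{lemma:characterizationofcenterset} and Lemma~\ref{lemma:boundaryofintersectionofrballs} without further detail, and what you have written is exactly a careful unpacking of that implication. Your caveat that $C(P,r)$ must be a genuine convex body for Lemma~\ref{lemma:boundaryofintersectionofrballs} to apply is well taken and matches how the corollary is actually used later (with $r=w>R(P)$).
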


Finally, we have all the tools to prove the following.

\begin{theorem}
Let $P\in H^2$ be an ordinary reduced polygon of minimal width $w$. Then, there is a boundary point $z\in\partial P$, such that $P\subset B\left(z,w\right)$.
\end{theorem}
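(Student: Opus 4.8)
The plan is to recast the statement as $C(P,w)\cap\partial P\neq\emptyset$, to reduce this by a short topological argument to the claim that $C(P,w)$ is not contained in $\Int P$, and to settle the latter by proving that one of the foot points $t_i$ lies in $C(P,w)$; the analytic heart of the matter is a hyperbolic-trigonometric inequality modelled on Musielak's spherical computation in \cite{Mus}.

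First I would reduce to ruling out $C(P,w)\subseteq\Int P$. By Corollary~\ref{corollary:Rislessthanw} we have $R(P)<w$, so the circumcenter $o$ of $P$ satisfies $P\subseteq B(o,R(P))\subseteq B(o,w)$, whence $o\in C(P,w)$; moreover $o\in P$, since the centre of the smallest enclosing disk of $P$ lies in the convex hull of its contact points with $P$. If $o\in\partial P$ we are done with $z=o$, so assume $o\in\Int P$. The set $C(P,w)=\bigcap_i B(v_i,w)$ is compact and convex, so if some $q\in C(P,w)$ does not lie in $\Int P$ then either $q\in\partial P$, in which case $z=q$ works, or $q\notin P$, in which case the segment $[o,q]\subseteq C(P,w)$ crosses $\partial P$ at a point $z\in C(P,w)\cap\partial P$. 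Hence it suffices to derive a contradiction from $C(P,w)\subseteq\Int P$.

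Assume then $C(P,w)\subseteq\Int P$. For a vertex $v_i$, let $t_i$ be its orthogonal projection onto the opposite side $\left[v_{i+\frac{n-1}{2}},v_{i+\frac{n+1}{2}}\right]$; by the definition of ordinary reducedness $t_i$ lies in the relative interior of that side, so $t_i\in\partial P$, and $d(v_i,t_i)=w$. By Corollary~\ref{corollary:Estarcharacterization} together with Lemma~\ref{lemma:boundaryofintersectionofrballs}, $\partial C(P,w)$ is a finite union of circular arcs of radius $w$, each centred at a vertex of $P$ in $E^*(P)$. The plan is now to show that $t_i\in C(P,w)$ for at least one such vertex $v_i$: since $d(t_i,v_i)=w$ this yields $t_i\in\partial B(v_i,w)\cap C(P,w)\subseteq\partial C(P,w)\subseteq\Int P$, contradicting $t_i\in\partial P$, and the theorem follows.

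It remains to prove that $t_i\in C(P,w)$ for a suitable $i$, which is the main obstacle. As $P$ is the convex hull of its vertices, $t_i\in C(P,w)$ means precisely $d(t_i,v_j)\le w$ for every $j$; recalling $d(t_i,v_i)=w$, this says that $v_i$ is, up to equality, the farthest vertex of $P$ as seen from $t_i$. Following the scheme of Musielak \cite{Mus} but with hyperbolic functions, I would compute $\cosh d(t_i,v_j)$ from the hyperbolic laws of sines and cosines in the right triangles $\left[v_k,t_k,v_{k+\frac{n+1}{2}}\right]$ and in the triangle $\left[t_i,v_i,v_j\right]$, simplify via the congruences of Lemma~\ref{lemma:congruence}, bound the result using the angle inequalities $\beta_k\le\gamma\le\alpha_k$ of Lemma~\ref{lemma:angles}, and control the extreme distances $d(t_i,v_j)$ by Lemma~\ref{lemma:diametralchord} and the diameter bound of Theorem~\ref{thm:diameterbound:sagmeister}. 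The difficulty I expect here is precisely this estimate: the hyperbolic identities do not telescope as cleanly as the spherical ones, so the bounds of Lemma~\ref{lemma:angles} and the diametral-chord reduction have to be inserted in just the right order, and the vertices $v_j$ lying near a diameter endpoint may require separate treatment, where $d(t_i,v_j)<w$ should follow from monotonicity in the relevant angle; the arc decomposition above at least guarantees that it is enough to carry out the verification for a single vertex $v_i\in E^*(P)$.
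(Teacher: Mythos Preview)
Your reduction in the first two paragraphs is correct and is exactly how the paper begins: one shows $C(P,w)\neq\emptyset$ via $R(P)<w$, notes that $C(P,w)$ is convex, and is left to derive a contradiction from the hypothesis $C(P,w)\subseteq\Int P$.

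The gap is in the remaining step. You propose to reach the contradiction by proving that some foot point $t_i$ lies in $C(P,w)$, i.e.\ that $d(t_i,v_j)\le w$ for \emph{all} $j$, and you describe this as ``following the scheme of Musielak''. Two problems. First, this is not Musielak's scheme: his spherical proof, which the paper transports to $H^2$ almost verbatim, does \emph{not} estimate the distances $d(t_i,v_j)$ at all. Second, you do not actually carry out the estimate---you list the tools (Lemmas~\ref{lemma:congruence}, \ref{lemma:angles}, \ref{lemma:diametralchord}, Theorem~\ref{thm:diameterbound:sagmeister}) and then concede that ``the difficulty I expect here is precisely this estimate''. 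Nothing in those lemmas controls $d(t_i,v_j)$ for an arbitrary $j$: Lemma~\ref{lemma:angles} bounds the angles in a single butterfly, Lemma~\ref{lemma:diametralchord} only locates the endpoints of diametral chords, and the diameter bound says nothing about distances from a side-point $t_i$ to a non-opposite vertex. Your remark that ``it is enough to carry out the verification for a single vertex $v_i\in E^*(P)$'' does not help either: you still need to exhibit that vertex and prove the full family of inequalities for it.

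The paper's contradiction is purely geometric and avoids any such computation. One fixes a diametral chord $[x,y]$ with $x=v_i$ and $y=v_{i+\frac{n\pm1}{2}}$, and sets $x'=t_i$. Under the standing assumption $C(P,w)\subseteq\Int P$ the boundary point $x'$ is not in $C(P,w)$, so by Corollary~\ref{corollary:Estarcharacterization} there is a vertex $e_k\in E^*(P)$ with $d(x',e_k)>w$; using the arc structure and the perimeter-bisection property of the chords $[v_j,t_j]$ one can choose $e_k$ on the far side of the line through $x$ and $x'$. Let $z\in[x',e_k]$ satisfy $d(x',z)=w=d(x',x)$; then $z\in P$. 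The triangles $[x,x',y]$ and $[z,x',y]$ share the sides $x'y$ and have $d(x',x)=d(x',z)$, but the angle at $x'$ in the second triangle is strictly larger, so the hyperbolic hinge lemma gives $d(z,y)>d(x,y)=\diam P$, which is impossible since $z,y\in P$. This one-line triangle comparison replaces the entire analytic programme you outline; if you want to rescue your approach you would have to prove the unconditional statement that some $t_i$ lies in $C(P,w)$, which is strictly stronger than the theorem and is not established anywhere in the paper.
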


\begin{proof}
Assuming that $P$ is an $n$-gon, we use the same notations as in Section \ref{sec:ordinary}. We consider the set $C\left(P,w\right)$, and we aim to show that the intersection $C\left(P,w\right)\cap\partial P$ is not empty. We prove this by contradiction.

Corollary \ref{corollary:Rislessthanw} and the hyperbolic Jung theorem (cf. Dekster \cite{Dek95J}) implies that $C\left(P,w\right)$ intersects the interior of $P$. If $C\left(P,w\right)$ also contains some point in the exterior of $P$, then the proof is complete by the convexity of $C\left(P,w\right)$ (see Lemma \ref{lemma:characterizationofcenterset}). So let us assume that $C\left(P,w\right)$ is a subset of the interior of $P$.

Let $e_1,\ldots,e_m$ be the points of $E^*\left(P\right)$ in a positive orientation, where we understand the indices modulo $m$. By the assumption that $C\left(P,w\right)$ lies in the interior of $P$, it is easy to see from the definition of ordinary reduced polygons that $3\leq m\leq n$. By Lemma \ref{lemma:boundaryofintersectionofrballs}, to each vertex $e_i$ in $E^*\left(P\right)$, there is a shorter circular arc $\mathcal{C}_i$ of $B\left(e_i,r\right)$ on the boundary. Let $q_i$ be the intersection of $\mathcal{C}_i$ and $\mathcal{C}_{i+1}$. Lemma \ref{lemma:boundaryofintersectionofrballs} and Corollary \ref{corollary:Estarcharacterization} implies that the boundary of $C\left(P,w\right)$ is the union of short circular arcs of radius $r$ connecting $q_i$ and $q_{i+1}$ for $1\leq i\leq m$; let us denote these arcs by $\arc{q_iq_{i+1}}$.

Each point $e_i$ of $E^*\left(P\right)$ is a vertex $v_{\sigma\left(i\right)}$ for an injective map $\sigma\colon\left\{1,\ldots,m\right\}\to\left\{1,\ldots,n\right\}$. We set $s_i=t_{\sigma\left(i\right)}$ for $i\in\left\{1,\ldots,m\right\}$. By our assumption, $s_i\in\partial B\left(e_i,r\right)$ is not in $C\left(P,w\right)$, and hence it is not contained in the arc $\arc{q_i,q_{i+1}}$.

Clearly, the points $e_1$, $q_2$ and $s_1$ are not collinear, since $B\left(e_1,w\right)\supset P$, both $q_2$ and $s_1$ are boundary points of the circle $B\left(e_1,w\right)$, on the other hand, while $s_1$ is a boundary point of $P$, $q_2$ lies in the interior. Hence, $s_1$ is either in the same open hemisphere bound by the line through $e_1$ and $q_2$ as $q_3$, or as $q_m$ (or equivalently as $e_2$). By symmetry, we can assume without loss of generality that the first case occurs.

We also consider a diametral chord $\left[x,y\right]$ of $P$. We note that $x=v_i$ and $y=v_{i+\frac{n\pm 1}{2}}$ for some $1\leq i\leq n$ where in this case the indices are understood modulo $n$ (cf. Lassak \cite{Las24}). Let $x'$ and $y'$ be the orthogonal projections of $x$ and $y$ to the opposite sides, respectively (i.e. $x'=t_i$ and $y'=t_{i+\frac{n\pm 1}{2}}$). Let $m_1$ be the midpoint of $\left[x,y'\right]$ and $m_2$ be the midpoint of $\left[x',y\right]$. Then, if we consider the boundary of $P$, one of the triples $\left(x,m_1,y'\right)$ and $\left(x',m_2,y\right)$ are on the boundary in this order with the same orientation, we assume without loss of generality that this is the positive orientation.

We can also assume that the triple $\left(e_1,x,e_2\right)$ are on the boundary of $P$ in this order with respect to the positive orientation, where we allow the case $x=e_1$. This implies that $\left(s_1,x',s_2\right)$ also have the same order in the positive orientation, as all chords $\left[v_i,t_i\right]$ half the perimeter (see Lassak \cite{Las24} or Sagmeister \cite{S24}).

By our assumption that $C\left(P,w\right)$ is in the interior of $P$, so $x'$ is not contained in $B\left(e_k,r\right)$ for some $2\leq k\leq m$ (by the assumption on the position of $p_3$ and $s_1$, we deduce $d\left(e_1,x'\right)\leq d\left(x,x'\right)=w$). We can also observe that there is such $e_k$ on the opposite half-plane of the line through $e_1$ and $p_2$ as $x'$. Let $z$ be the interior point of the segment $\left[x',e_k\right]$ such that $d\left(x',z\right)=d\left(x',x\right)=w$. In particular, $z\in P$. Considering the two triangles $\left[z,x',y\right]$ and $\left[x,x',y\right]$, they have two equal sides with different enclosed angle. This implies
$$
d\left(z,y\right)>d\left(x,y\right)=\diam P,
$$
which is clearly a contradiction.
\end{proof}

\end{document}